\documentclass[a4paper,10pt]{amsart}
\usepackage{amsmath,amssymb,amsthm,leftidx,cleveref,xcolor}

\sloppy

\renewcommand{\>}{\rangle}
\newcommand{\<}{\langle}

\newcommand{\kolomtwee}[2]{\left ( \begin{matrix} #1\cr #2 \end{matrix} \right)}

\newcommand{\CR}{\bar{\partial}}

\newcommand{\ev}{\operatorname{ev}}

\newcommand{\Crit}{\operatorname{Crit}}

\newcommand{\prt}{\operatorname{part}}
\newcommand{\field}{\operatorname{field}}
\newcommand{\inter}{\operatorname{inter}}
\newcommand{\contr}{\operatorname{contr}}

\newcommand{\diag}{\operatorname{diag}}

\newcommand{\del}{\partial}

\newcommand{\IC}{\operatorname{\mathbb{C}}}
\newcommand{\IZ}{\operatorname{\mathbb{Z}}}

\newcommand{\IR}{\operatorname{\mathbb{R}}}
\newcommand{\IN}{\operatorname{\mathbb{N}}}
\newcommand{\IH}{\operatorname{\mathbb{H}}}

\newcommand{\IT}{\operatorname{\mathbb{T}}}

\newcommand{\MM}{\operatorname{\mathcal{M}}}

\renewcommand{\AA}{\operatorname{\mathcal{A}}}
\newcommand{\PP}{\operatorname{\mathcal{P}}}

\newtheorem{theorem}{Theorem}[section]
\newtheorem{proposition}[theorem]{Proposition}

\newtheorem{lemma}[theorem]{Lemma}
\newtheorem{corollary}[theorem]{Corollary}
\newtheorem{remark}[theorem]{Remark}

\title{Cuplength estimates for\\ periodic solutions of\\ Hamiltonian particle-field systems}
\author{Oliver Fabert, Niek Lamoree}
\thanks{O. Fabert, VU Amsterdam, The Netherlands. Email: o.fabert@vu.nl}
\pagestyle{myheadings}

\begin{document}
\maketitle

\begin{abstract}
We consider a natural class of time-periodic infinite-dimensional nonlinear Hamiltonian systems modelling the interaction of a classical mechanical system of particles with a scalar wave field. When the field is defined on a space torus $\IT^d=\IR^d/(2\pi\IZ)^d$ and the coordinates of the particles are constrained to a submanifold $Q\subset\IT^d$, we prove that the number of $T$-periodic solutions of the coupled Hamiltonian particle-field system is bounded from below by the $\IZ_2$-cuplength of the space $\Lambda^{\contr} Q$ of contractible loops in $Q$, provided that the square of the ratio $T/2\pi$ of time period $T$ and space period $X=2\pi$ is a Diophantine irrational number. The latter condition is necessary since for the infinite-dimensional version of Gromov-Floer compactness as well as for the $C^0$-bounds we need to deal with small divisors.  
\end{abstract}

\tableofcontents
\markboth{O. Fabert, N. Lamoree}{Cuplength estimates for particle-field systems} 

\section{A simple particle-field system}

As a motivation for the general class of Hamiltonian particle-field systems that we introduce below, we start with the following simple model of a classical particle-field system describing the interaction of a particle with its self-generated scalar wave field. After imposing periodicity conditions in space, we consider a (time-dependent) scalar field $\varphi(t,x)\in\IR$, $x\in\IT^d=\IR^d/(2\pi\IZ)^d$, as well as a particle whose center locus $q(t)$ is constrained to a closed submanifold $Q\subset\IT^d$. While the case $d>3$ only becomes relevant if we also allow for mechanical systems with more than one particle, we will ambiguously speak of a single particle. We assume that $q(t)$ and $\varphi(t,x)$ satisfy the following coupled system of differential equations,
\begin{eqnarray*}
 \nabla_t^2 q(t) &=& - \nabla V(t,q(t))-\nabla(\varphi*\rho)(t,q(t)),\\
 \del_t^2 \varphi(t,x) &=& \Delta\varphi(t,x)-\varphi(t,x)-\rho(x-q(t)).
\end{eqnarray*}
Here $\Delta=\del_{x_1}^2+\ldots+\del_{x_d}^2$ denotes the Laplacian for functions on $\IT^d$, while $\nabla$ denotes the gradient for functions on $Q\subset\IT^d$ as well as the Levi-Civita connection on the tangent bundle $TQ$ of $Q$ with respect to the induced canonical Riemannian metric. Furthermore, $$(\varphi*\rho)(t,q)\,=\,\int_{\IT^d} \varphi(t,x)\cdot\rho(q-x)\,dx$$ denotes convolution with respect to the space coordinate $x\in\IT^d$ with a fixed bump function $\rho\in C^\infty(\IT^d,\IR)$ which models the particle-field interaction and thereby takes into account the shape and density of the particle. We emphasize that all studied models for particle-field interaction such as in \cite{BaGa}, \cite{Kun}, \cite{Spo} use a smooth bump function $\rho$ modelling a realistic particle instead of the $\delta$-distribution modelling a point particle in order to avoid problems with singularities. Finally $V\in C^\infty(\IR/(T\IZ)\times\IT^d,\IR)$, $V_t(x)=V(t,x)$ denotes an external field on $(Q\subset) \IT^d$ which is explicitly assumed to be time-dependent with time period $T$.\\

Introducing a time-dependent momentum variable $p(t)\in T^*_{q(t)} Q$ and time-dependent momentum field $\pi(t,x)\in\IR$, $x\in\IT^d$, it is immediate to see that the above coupled system of differential equations is equivalent to the coupled system of differential equations
\begin{eqnarray}
 \del_t q = p ,& \nabla_t p= -\nabla V_t(q)-\nabla(\varphi*\rho)(q),\label{test}\\
 \del_t \varphi = \pi,& \del_t \pi = \Delta\varphi-\varphi-\rho(q-\cdot).\notag
\end{eqnarray}
Note that here the induced Riemannian metric on $Q$ is also used to canonically identify $T^*Q\cong TQ$.\\

It is well-known that the pair of equations $$\del_t q = p,\quad \nabla_t p= -\nabla V_t(q)$$ for $(q(t),p(t))$ is Hamiltonian for the Hamiltonian function $$H_{\prt}(q,p)=\frac{|p|^2}{2}+V_t(q)$$ on the phase space $T^*Q$, which is a finite-dimensional symplectic manifold. On the other hand, it is also a classical fact that the pair of equations $$\del_t \varphi = \pi,\quad \del_t \pi = \Delta\varphi-\varphi$$ for $(\varphi(t,x),\pi(t,x))$ is Hamiltonian for the Hamiltonian function 
\begin{equation*}
H_{\field}(\varphi,\pi)=\int_{\IT^d} \left(\frac{|\pi(x)|^2}{2}+\frac{|\nabla \varphi(x)|^2}{2}+\frac{|\varphi(x)|^2}{2}\right)\,dx
\end{equation*} 
on the phase space $L^2(\IT^d,\IR)\oplus L^2(\IT^d,\IR)$. Note that $L^2(\IT^d,\IR)\oplus L^2(\IT^d,\IR)$ is an infinite-dimensional symplectic Hilbert space, where the symplectic form is given by the $L^2$ inner product as well as the complex structure $J\cdot(\varphi,\pi)=(\pi,-\varphi)$. Taking the particle-field interactions into account, we stress that the coupled system of differential equations is now a Hamiltonian system on the infinite-dimensional symplectic manifold $T^*Q\times L^2(\IT^d,\IR)\oplus L^2(\IT^d,\IR)$ for the Hamiltonian function $$H=H_{\prt}+H_{\field}+H_{\inter}$$ with $$H_{\inter}(q, \varphi,\pi)=(\varphi*\rho)(q)=\int_{\IT^d}\varphi(x)\rho(q-x)\,dx$$ denoting the Hamiltonian function modelling the interaction between particle and field.\\

While this set-up already seems promising, there is an alternative Hamiltonian structure for the wave equation which is more symmetric and turns out to be more suitable for the analysis, see \cite{Kuk1}, \cite{Kuk2}. From now on we will consider the modified coupled system 
\begin{eqnarray*}
 \del_t q = p,& \nabla_t p= -\nabla V_t(q)-\nabla(\varphi*\rho)(q)\\
 \del_t \varphi = B\pi,& \del_t \pi = - B\varphi-B^{-1}\rho(q-\cdot).
\end{eqnarray*}
with $B=\sqrt{1-\Delta}$. While this modified system is still Hamiltonian, the new phase space is $T^*Q\times\IH$ with $\IH=H^{\frac{1}{2}}(\IT^d,\IR)\oplus H^{\frac{1}{2}}(\IT^d,\IR)$, where the symplectic form on $\IH$ is again given by the standard complex structure $J\cdot(\varphi,\pi)=(-\pi,\varphi)$ and the standard inner product $\<\cdot,\cdot\>$ on $H^{\frac{1}{2}}(\IT^d,\IR)$ given by $$\<f,g\>=\int_{\IT^d} f(x) (Bg)(x)\,dx.$$ While it can be observed that the interaction Hamiltonian can remain unchanged, $$H_{\inter}(q,\varphi,\pi)=(\varphi*\rho)(q)=\<\varphi,B^{-1}\rho(q-\cdot)\>,$$
the field Hamiltonian is now changed to the more symmetric form 
$$H_{\field}(\varphi,\pi)=\frac{1}{2}\<\varphi,B\varphi\>+\frac{1}{2}\<\pi,B\pi\>.$$

\section{Hamiltonian particle-field systems}

For more details and background on Hamiltonian PDEs we refer to \cite{Kuk1}, see also \cite{AM} and \cite{Kuk2}. We start by recalling that $\IH=H^{\frac{1}{2}}(\IT^d,\IR)\oplus H^{\frac{1}{2}}(\IT^d,\IR)$ is a separable Hilbert space which is equipped with a (strongly) symplectic form $\omega_{\IH}=\langle J_{\IH}\cdot,\cdot\rangle_{\IH}:\IH\times\IH\to\IR$ given by the standard complex structure $J_{\IH}\cdot(\varphi,\pi)=(-\pi,\varphi)$ and the standard inner product $\<\cdot,\cdot\>_{\IH}$ on $H^{\frac{1}{2}}(\IT^d,\IR)\oplus H^{\frac{1}{2}}(\IT^d,\IR)$. Here being (strongly) symplectic means that $\omega$ is anti-symmetric and defines an isomorphism between $\IH$ and its dual $\IH^*$. There exists a complete basis $(e_n^\pm)_{n\in\IZ^d}$ of $\IH$ with $\omega_{\IH}(e_n^+,e_m^-)=\delta_{n,m}$, $J_{\IH}e_n^\pm:=\pm e_n^\mp$ of the form $e_n^+=(\xi_n,0)$, $e_n^-=(0,\xi_n)$, where $(\xi_n)_{n\in\IZ^d}$ is a suitably normalized complete basis of $H^{\frac{1}{2}}(\IT^d,\IR)$ in terms of sine and cosine functions. Furthermore, after identifying $\IH$ with the subspace of $\IH\otimes\IC$ on which $J_{\IH}=i$, note that there is a complete unitary basis $(z_n)_{n\in\IZ^d}$ which further allows us to identify $\IH$ with the Hilbert space $\ell^2(\IZ^d,\IC)$. \\

The symplectic Hilbert space $\IH=H^{\frac{1}{2}}(\IT^d,\IR)\oplus H^{\frac{1}{2}}(\IT^d,\IR)$ naturally comes equipped with a symplectic Hilbert scale $(\IH_h)_{h\in\IR}$ with $$\IH_h=H^{\frac{1}{2}+h}(\IT^d,\IR)\oplus H^{\frac{1}{2}+h}(\IT^d,\IR)$$ in the sense that the inclusion $\IH_h\subset\IH_i$ is compact and dense if $h>i$, and $\omega_{\IH}$ defines an isomorphism between the Hilbert spaces $\IH_h$ and $\IH_{-h}^*$. Furthermore we define $$\IH_\infty =\bigcap_{h\in\IR}\IH_h,\,\, \IH_{-\infty} = \bigcup_{h\in\IR}\IH_h.$$ 
In particular, we stress that $\omega_{\IH}:\IH_\infty\to\IH_\infty^*$ is only injective and hence $\omega_{\IH}$ only defines a weakly symplectic form on the Frechet space $\IH_\infty=C^{\infty}(\IT^d,\IR)\oplus C^{\infty}(\IT^d,\IR)$. \\

The above setup immediately generalizes from $\IH$ to $\widetilde{M}:=M\times\IH$, where $(M,\omega_M)=(T^*Q,d\lambda_M)$ is the cotangent bundle of the closed manifold $Q$ with its canonical symplectic form given by the Liouville one-form $\lambda_M$. We assume that $\widetilde{M}$ carries the product symplectic form $\omega=\pi_M^*\omega_M+\pi_{\IH}^*\omega_{\IH}$, where $\pi_M$, $\pi_{\IH}$ denote the projection onto the first or second factor of $\widetilde{M}$, respectively. Note that the Riemannian metric on the submanifold $Q$, obtained from the embedding into $\IT^d$, leads to a natural choice of Riemannian metric $\langle\cdot,\cdot\rangle_M$ on $T^*Q$, defined using the Levi-Civita connection on $T^*M\cong TM$, with corresponding $\omega_M$-compatible almost complex structure $J_M$, see \cite{C} for details. It follows that $\omega=\langle J\cdot,\cdot\rangle$ for the canonical Riemannian metric $\langle\cdot,\cdot\rangle=\pi_M^*\langle\cdot,\cdot\rangle_M+\pi_{\IH}^*\langle\cdot,\cdot\rangle_{\IH}$ and the canonical almost complex structure $J=\diag(J_M,J_{\IH})$ on $\widetilde{M}=T^*Q\times\IH$. Furthermore the generalization of the scale structure is given by $\widetilde{M}_h:=M\times\IH_h$ for $h\in\IR$.\\

As in \cite{F}, \cite{FL} we consider a class of time-dependent Hamiltonians $H_t$ of the form $$H_t(u)=H^A(u)+F_t(u)\,\,\textrm{with}\,\,H^A(u)=\frac{1}{2}\langle \pi_{\IH}u,A \pi_{\IH}u\rangle,\,\,u=(q,p,\varphi,\pi),$$
where $A$ is a differential operator on $\IH$ such that $(e_n^\pm)_{n\in\IZ^d}$ is a basis of eigenvectors for $A$ with real eigenvalues, and $F_t: \widetilde{M}\to\IR$ is time-periodic with period $T$ and smoothly depending on the time $t\in\IR$. Since in this paper we are particularly interested in particle-field systems, we restrict the general class of Hamiltonians to the case where \begin{eqnarray*}
H^A(u)&=&H_{\field}(\varphi,\pi)\,\,\textrm{with}\,\, H_{\field}(\varphi,\pi)=\frac{1}{2}\<\varphi,B\varphi\>+\frac{1}{2}\<\pi,B\pi\>\,\,\textrm{as before},\\ F_t(u)&=&F_{\prt,t}(q,p)+F_{\inter,t}(q,\varphi,\pi).
\end{eqnarray*}
Note that in this case $A=\diag(B, B)$ is of order $1$ and the eigenvalue corresponding to the eigenfunction $e_n^{\pm}$ is $\sqrt{n^2+1}$ with $n^2=n_1^2+\ldots+n_d^2$ for all $n=(n_1,\ldots,n_d)\in\IZ^d$. As in \cite{C} we assume that the particle Hamiltonian $F_{\prt,t}\in C^{\infty}(T^*Q,\IR)$ is asymptotically quadratic with respect to the momentum coordinates $p$ in the sense that \begin{itemize}
\item[(F1)] $dF_{\prt,t}(q,p)\cdot p\frac{\del}{\del p} - F_{\prt,t}(q,p)\geq c_0 |p|^2 - c_1$, for some constants $c_0>0$ and $c_1\geq 0$,
\item[(F2)] $\displaystyle \left|\frac{\del^2 F_{\prt,t}}{\del p_i\del p_j}(q,p)\right|,\,\, \left|\frac{\del^2 F_{\prt,t}}{\del p_i\del q_j}(q,p)\right|< c_2$ for some constant $c_2\geq 0$,
\end{itemize} 
while for the interaction Hamiltonian $F_{\inter,t}(q,\varphi,\pi)$ we require that 
\begin{itemize}
    \item[(F3)] $F_{\inter,t}(q,\varphi,\pi)=f_t((\varphi*\rho)(q),(\pi*\rho)(q))$ with $f_t\in C^{\infty}(\IR^2,\IR)$, $f_{t+T}=f_t$ having bounded first derivatives. 
\end{itemize}
As in \cite{C} $(q_i,p_i)_i$ are coordinates on $T^*Q$ induced by geodesic normal coordinates $(q_i)$ on $Q$. Finally we assume without loss of generality that all frequencies are present in $\rho=\sum_{n\in\IZ^d}\hat{\rho}(n)z_n$ in the sense that $\hat{\rho}(n)\neq 0$ for all $n\in\IZ^d$. \\

In analogy with our work in \cite{F} and \cite{FL}, it is the goal of this paper to prove the existence of $T$-periodic solutions of $$\partial_t u\,=\,X^H_t(u)\,=\,JA\pi_{\IH}u\;+\;J\nabla F_t(u).$$ Before we can state the main result, we recall that an irrational number $\sigma$ is called Diophantine if there exists $c>0$ and $r>0$ such that $$\inf_{m\in\IZ}\Big|\sigma-\frac{m}{n}\Big|\,\geq\, c\cdot n^{-r}\,\,\textrm{for all}\,\, n\in\IN.$$ 
We stress that the set of Diophantine numbers has full measure. Furthermore the $\IZ_2$-cuplength $c\ell_{\IZ_2}(\Lambda^{\contr}Q)$ of the space $\Lambda^{\contr}Q=C^0_{\contr}(\IR/(T\IZ),Q)$ of contractible loops in $Q$ is defined as 
\begin{eqnarray*}
c\ell_{\IZ_2}(\Lambda^{\contr}Q)=\sup\{N+1:\exists\theta_1,\ldots\theta_N\in H^{*\neq 0}(\Lambda^{\contr}Q,\IZ_2): \theta_1\cup\ldots\cup\theta_N\neq 0\}.
\end{eqnarray*}

\begin{theorem}\label{periodic-orbits}
Assume that the squared ratio $T^2/(2\pi)^2$ of time and space period is a Diophantine irrational number, $F_{\prt,t}$ satisfies (F1), (F2), and $F_{\inter,t}$ satisfies (F3). Then the number of $T$-periodic solutions $$u=(q,p,\varphi,\pi):\IR/(T\IZ)\to T^*Q\times H^{\frac{1}{2}}(\IT^d,\IR)\oplus H^{\frac{1}{2}}(\IT^d,\IR)$$ of the coupled system of Hamiltonian equations 
\begin{eqnarray*}
 \del_t q &=& \nabla_p F_{\prt,t},\\ 
 \nabla_t p &=& -\nabla_q F_{\prt,t} - \del_1 f_t\cdot\nabla(\varphi*\rho)(q) - \del_2 f_t\cdot\nabla(\pi*\rho)(q),\\
 \del_t \varphi &=& B\pi + \del_2 f_t\cdot B^{-1}\rho(q-\cdot),\\ 
 \del_t \pi &=& - B\varphi - \del_1 f_t\cdot B^{-1}\rho(q-\cdot)
\end{eqnarray*}
is bounded from below by $c\ell_{\IZ_2}(\Lambda^{\contr} Q)$; in particular, it is infinite if $\pi_1(Q)$ is finite. Furthermore $u$ has image in the weakly symplectic manifold $T^*Q\times C^\infty(\IT^d,\IR)\oplus C^\infty(\IT^d,\IR)$.
\end{theorem}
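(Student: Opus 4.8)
The plan is to realize the $T$-periodic solutions as critical points of an action functional on a suitable loop space and apply an infinite-dimensional version of the cuplength estimate in Floer-type homology, in the spirit of the authors' earlier work \cite{F}, \cite{FL}. The first step is to set up the symplectic action functional $\AA_H$ on the space of contractible loops $u:\IR/(T\IZ)\to\widetilde M$, writing $\AA_H(u)=\AA_{H^A}(u)+\int F_t(u)\,dt$ where the quadratic part $\AA_{H^A}$ is the action of the linear wave flow on $\IH$. Because $H^A$ is unbounded, one passes to the Hilbert scale: the quadratic form splits $\IH$ into positive, negative, and (generically empty) null eigenspaces of the Hessian of $\AA_{H^A}$ on the loop space, and the \emph{Diophantine condition} on $T^2/(2\pi)^2$ is exactly what guarantees that the eigenvalues $2\pi k/T \pm \sqrt{n^2+1}$ stay bounded away from zero at a polynomial rate in $(k,n)$, so that the gradient flow of $\AA_H$ is well-behaved on $\widetilde M_h$ and the null-space never appears. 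This is the small-divisor input flagged in the abstract.

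The second step is the analytic heart: establishing $C^0$-bounds and Gromov--Floer compactness for the negative gradient trajectories of $\AA_H$ (equivalently, the Floer cylinders for $J$ and $H_t$) taking values in the scale $\widetilde M_h$. Here conditions (F1)--(F2) on $F_{\prt,t}$ are the classical asymptotic-quadraticity hypotheses from \cite{C} that yield a priori $C^0$-bounds on the particle component $(q,p)$, while (F3) — dependence of $F_{\inter,t}$ only on the smoothing combinations $(\varphi*\rho)(q),(\pi*\rho)(q)$ with bounded first derivatives — ensures the interaction term is a compact, uniformly bounded perturbation that does not destroy these bounds and that feeds back regularity into the field component. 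Combined with the polynomial lower bound on the small divisors, a bootstrap argument then shows any $T$-periodic solution, and indeed any finite-energy Floer cylinder, automatically lands in $\widetilde M_\infty=T^*Q\times C^\infty\oplus C^\infty$; this simultaneously proves the last sentence of the theorem. The resulting Floer/Morse complex is built on the loop space $\Lambda^{\contr}\widetilde M$, which deformation-retracts onto $\Lambda^{\contr} M = \Lambda^{\contr}(T^*Q)$ — and the latter, via the cotangent projection, onto $\Lambda^{\contr} Q$ (using that $T^*Q$ retracts to $Q$); so the homology in question is that of $\Lambda^{\contr} Q$.

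The third step is to run the cuplength argument. One shows that the Floer homology of $(\widetilde M, H_t)$ for contractible orbits is isomorphic to the (Morse, or singular) homology $\H_*(\Lambda^{\contr} Q;\IZ_2)$ — this is the infinite-dimensional analogue of the Floer--Viterbo isomorphism $\HF_*(T^*Q)\cong\H_*(\Lambda Q)$, restricted to the contractible component — and that the pair-of-pants product corresponds to the cup product on $\Lambda^{\contr} Q$. A nonzero cup product $\theta_1\cup\cdots\cup\theta_N\neq 0$ of $N$ classes then forces at least $N+1$ distinct critical points of $\AA_H$ by the standard Lusternik--Schnirelmann / cuplength mechanism (a product of $N$ nonzero classes cannot be supported on fewer than $N+1$ critical levels), giving the bound $c\ell_{\IZ_2}(\Lambda^{\contr} Q)$; when $\pi_1(Q)$ is finite, $\Lambda^{\contr} Q=\Lambda Q$ has infinitely many nonzero cohomology classes with nonvanishing products (from the $\H^*(Q)$-module structure together with loop-space classes), so the count is infinite.

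I expect the main obstacle to be the second step — the simultaneous $C^0$-bounds and Gromov--Floer compactness in the infinite-dimensional scale setting with small divisors. One must control the field component uniformly along Floer trajectories despite $H^A$ being only weakly, not strongly, coercive after the Diophantine splitting, and make the bootstrap from (F3) interact correctly with the polynomial divisor estimates; the particle part, by contrast, is essentially \cite{C}, and the algebraic cuplength step is formal once the homology and product structures are identified.
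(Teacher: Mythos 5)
Your third step rests on a claim the paper never makes and that would be very hard to justify: you propose to construct the Floer homology of $(\widetilde M,H_t)$, prove an infinite-dimensional Floer--Viterbo isomorphism $\HF_*(\widetilde M,H_t)\cong\H_*(\Lambda^{\contr}Q;\IZ_2)$, and identify the pair-of-pants product with the cup product, then invoke the Lusternik--Schnirelmann mechanism. None of this is done in the paper, and it would require substantially more than what is actually needed (transversality for the full moduli spaces, gluing, the ring isomorphism, all in an infinite-dimensional scale setting). The paper instead follows Cieliebak's and Schwarz's \emph{evaluation-map} scheme: one fixes $\theta_1,\ldots,\theta_N\in H^{*\neq 0}(\Lambda^{\contr}Q;\IZ_2)$ with nonzero cup product, considers a $\tau$-dependent family of moduli spaces $\MM^{k,\leq a}_\tau$ of Floer cylinders (with the interaction switched on only on $[0,2\tau]$), and shows $\ev_{1,k,\tau}^*\theta_1\cup\cdots\cup\ev_{N,k,\tau}^*\theta_N\neq 0$ by a homotopy from $\tau=0$, where the equation decouples and \cite[Thm.~7.6]{C} applies directly. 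No Floer homology is built, no isomorphism proved, and the $N+1$ orbits come from action-level separation of the limits of the $N$ shifted cylinders as $\tau\to\infty$. Your route is not a variant of the paper's argument but a different and much more demanding one, with the central step left unproved.

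A second, more concrete gap concerns your handling of the $C^0$-bounds: you assert that (F3) makes $F_{\inter,t}$ ``a compact, uniformly bounded perturbation that does not destroy'' the Cieliebak-style bounds, but as written this is not enough. The paper explicitly flags as a key trick (Section~\ref{properties}, Lemmas~\ref{bounded} and \ref{p-cutoff}) the replacement of $F_{\inter,t}$ by a cut-off version $\bar F_{\inter,t}=\chi_{R_2}(\ln|p|)\cdot\chi_{R_1}(|u_{\IH}*\rho|_{C^3})\cdot F_{\inter,t}$ that has the \emph{same} periodic orbits below a given action level. The first cut-off uses the Diophantine small-divisor estimate to show $\phi^A_T$ displaces points with large $|u_{\IH}|_1$ more than $\phi^G_T$ can move them, giving the a priori $\IH_1$-bound; the second cut-off in $p$ is what makes the Cieliebak estimate and the maximum principle applicable. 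Related to this, you do not mention passing to $\bar u(t)=\phi^A_{-t}u(t)$ to trade the unbounded operator $A$ for a twisted periodicity condition, nor the finite-dimensional approximations $\IH^k$ with the uniform-in-$k$ small-divisor decay estimate (Lemma~\ref{small-divisors}) that drive Gromov--Floer compactness in infinite dimensions. These are not cosmetic details but the substance of the second step you correctly identify as the analytic heart; without the cut-off and the $\phi^A$-twisting your plan would not produce the required a priori bounds.
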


Here  $\nabla_qF_{\prt,t},\;\nabla_pF_{\prt,t}$ denote the components of the gradient $\nabla F_{\prt,t}$ with respect to the splitting $TT^*Q\cong TQ\oplus T^*Q$ obtained using the Levi-Civita connection for the canonical Riemannian metric on $T^*Q$, and $\del_{1,2}f_t$ denote the first derivatives of $f_t$ with respect to the first and second coordinate. Further recall that $B=\sqrt{1-\Delta}$. \\ 

We emphasize that \Cref{periodic-orbits} generalizes the celebrated cuplength result for Hamiltonian systems on cotangent bundles in \cite{C} to the case of Hamiltonian particle-field systems: When $F_{\inter,t}=0$, then our result is equivalent with \cite[Main Theorem 1.1]{C}, since the linear wave equation admits only the trivial solution in the case when $T^2/(2\pi)^2$ is irrational. Apart from the fact that we now consider an infinite-dimensional Hamiltonian system with a densely defined Hamiltonian function and a small divisor problem, it is a nontrivial observation that we can still establish $C^0$-bounds. For the latter the trick is to replace the original interaction Hamiltonian $F_{\inter,t}$ by a slightly modified version $\bar{F}_{\inter,t}$ such that the resulting modified particle-field Hamiltonian system however still has the same set of periodic orbits. This is the content of section \ref{properties}. As for the Gromov-Floer compactness theorem in infinite dimensions, we need to deal with small divisors and crucially use that the bump function $\rho$ is smooth. \\   

\begin{remark} We expect that \Cref{periodic-orbits} can be generalized in the following directions: 
\begin{enumerate}
\item By inspecting the proof given below it becomes apparent that the statement of \Cref{periodic-orbits} continues to hold true for interaction Hamiltonians $F_{\inter,t}$ which additionally depend on the $p$-component and the condition (F3) is generalized to (F3'):  $F_{\inter,t}(q,p,\varphi,\pi)=f_t(q,p,(\varphi*\rho)(q),(\pi*\rho)(q))$ with $f_t\in C^{\infty}(T^*Q\times\IR^2,\IR)$, $f_{t+T}=f_t$ having bounded first derivatives with respect to the third and fourth entry, and $\nabla_p F_{\inter,t}\equiv 0$ for $|p|>R$ for some $R>0$. 
Since this generalization will follow from standard arguments in Floer theory and we want to focus on the new challenges that appear when applying ideas of Floer theory to particle-field Hamiltonians, we choose to work with the more restrictive condition (F3).
\item Generalizing from cotangent bundle to general symplectic manifolds and from scalar fields to fields that are sections in arbitrary vector bundles, we claim that ideas entering the proof of \Cref{periodic-orbits} can be used to prove existence results of periodic orbits for suitably generalized classes of Hamiltonian functions in the following generalized geometric framework: Consider a symplectic manifold $(B,\omega_B)$ with a foliation by Lagrangian submanifolds, which contains $(M,\omega_M)$ as a symplectic submanifold, as well as a symplectic vector bundle $E\to B$ over $B=(B,\omega_B)$. Let $\IH=(\IH,\omega_{\IH})$ denote a symplectic Hilbert space of sections in this bundle which are constant along the leaves, where the symplectic bilinear form $\omega_{\IH}$ on $\IH$ is defined using the symplectic structures on the fibres. Now consider time-periodic Hamiltonians $H_t=H^A+F_t:M\times\IH\to\IR$ with $F_t(u_M,u_{\IH})=f_t(u_M,u_{\IH}^\rho(u_M))$, where $u_{\IH}\mapsto u_{\IH}^\rho$ denotes a smoothing operator $\IH_{s-h}\to\IH_s$ for all $s\in\IR$. Note that setting $M=T^*Q\subset T^*T^d=B$, $E=B\times\IC$, and viewing  $\IH=H^{\frac{1}{2}}(T^d,\IC)$ as a space of sections in the trivial bundle that are constant along leaves of the canonical Lagrangian foliation on $T^*T^d$ given by the cotangent fibres, we see that this is indeed a generalization of the class of Hamiltonian particle-field systems considered above. The case when $(M,\omega_M)$ is closed and aspherical is treated in \cite{FL}. 
\end{enumerate}
\end{remark}

Since the particle-field Hamiltonian of the simple particle-field system that we consider above satisfies (F1), (F2), and (F3), we have the following consequence.  
\begin{corollary}\label{classical}
The number of $T$-periodic solutions $u=(q,\varphi):\IR/(T\IZ)\to Q\times C^\infty(\IT^d,\IR)$ of the simple particle-field system
\begin{eqnarray*}
 \nabla_t^2 q(t) &=& - \nabla V(t,q(t))-\nabla(\varphi*\rho)(t,q(t)),\\
 \del_t^2 \varphi(t,x) &=& \Delta\varphi(t,x)-\varphi(t,x)-\rho(x-q(t)).
\end{eqnarray*}
is bounded from below by $c\ell_{\IZ_2}(\Lambda^{\contr} Q)$, provided that the squared ratio $T^2/(2\pi)^2$ of time and space period is a Diophantine irrational number. In particular, it is infinite if $\pi_1(Q)$ is finite. \end{corollary}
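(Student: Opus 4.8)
\Cref{classical} is a specialization of \Cref{periodic-orbits}, so the plan is simply to realize the simple particle-field system as a member of the class of Hamiltonians covered by that theorem and then invoke it. First I would recall from Section 1 that, introducing the momentum variable $p=\del_t q\in T^*_{q}Q$ (using the induced metric to identify $TQ\cong T^*Q$) together with the momentum field $\pi$ determined by $B\pi=\del_t\varphi$, the second-order system in the statement of \Cref{classical} is equivalent to the first-order Hamiltonian system
\begin{eqnarray*}
 \del_t q = p,& \nabla_t p= -\nabla V_t(q)-\nabla(\varphi*\rho)(q),\\
 \del_t \varphi = B\pi,& \del_t \pi = - B\varphi-B^{-1}\rho(q-\cdot),
\end{eqnarray*}
and that the assignment $(q,p,\varphi,\pi)\mapsto(q,\varphi)$ restricts to a bijection between the $T$-periodic solutions of the two systems, with inverse $(q,\varphi)\mapsto(q,\del_t q,\varphi,B^{-1}\del_t\varphi)$. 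This first-order system is precisely the one appearing in \Cref{periodic-orbits} for the choice $F_{\prt,t}(q,p)=\tfrac12|p|^2+V_t(q)$ and $f_t(a,b)=a$, since then $\nabla_pF_{\prt,t}=p$, $\nabla_qF_{\prt,t}=\nabla V_t(q)$, $\del_1 f_t\equiv 1$, $\del_2 f_t\equiv 0$, and $F_{\inter,t}(q,\varphi,\pi)=f_t\big((\varphi*\rho)(q),(\pi*\rho)(q)\big)=(\varphi*\rho)(q)$.

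It then remains to verify the hypotheses (F1), (F2), (F3) for these choices. Condition (F3) is immediate: $f_t(a,b)=a$ is smooth, independent of $t$ (hence $T$-periodic), and has bounded first derivatives. For (F1) a one-line computation gives
$$dF_{\prt,t}(q,p)\cdot p\tfrac{\del}{\del p}-F_{\prt,t}(q,p)=|p|^2-\big(\tfrac12|p|^2+V_t(q)\big)=\tfrac12|p|^2-V_t(q)\;\geq\;\tfrac12|p|^2-\|V\|_{C^0},$$
so (F1) holds with $c_0=\tfrac12$ and $c_1=\|V\|_{C^0}$, which is finite because $\IT^d$ is compact. For (F2), working in geodesic normal coordinates centered at the point of evaluation, the kinetic term $\tfrac12|p|^2=\tfrac12 g^{ij}(q)p_ip_j$ has $p$-Hessian the inverse metric $g^{ij}$, bounded on the closed manifold $Q$, and mixed second derivatives $(\del_{q_j}g^{ik})(q)\,p_k$ that vanish at the center, the first derivatives of the metric being zero there; the potential $V_t(q)$ contributes nothing to either. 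Thus (F2) holds exactly as for the mechanical Hamiltonians treated in \cite{C}.

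Having checked (F1)--(F3), and since the Diophantine condition on $T^2/(2\pi)^2$ is common to both statements, \Cref{periodic-orbits} applies and produces at least $c\ell_{\IZ_2}(\Lambda^{\contr}Q)$ many $T$-periodic solutions $(q,p,\varphi,\pi)$ of the first-order system, each with image in $T^*Q\times C^\infty(\IT^d,\IR)\oplus C^\infty(\IT^d,\IR)$, this count being infinite when $\pi_1(Q)$ is finite. Pushing these forward through the bijection recalled above yields the same lower bound for the $T$-periodic solutions $(q,\varphi)$ of the original second-order system, with $q$ valued in $Q$ and $\varphi$ inheriting the $C^\infty$-regularity, which is exactly the assertion of \Cref{classical}. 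I do not anticipate any genuine obstacle here, as everything reduces to \Cref{periodic-orbits}; the only step deserving mild attention is (F1), which is where the boundedness of the external potential $V$ is used — and this is automatic, the wave field being defined over the compact torus $\IT^d$.
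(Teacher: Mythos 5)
Your proposal is correct and follows exactly the route the paper takes: the paper simply remarks that the simple particle-field Hamiltonian satisfies (F1), (F2), (F3) and then invokes \Cref{periodic-orbits}, and you supply precisely the verification of these hypotheses for $F_{\prt,t}(q,p)=\tfrac12|p|^2+V_t(q)$ and $f_t(a,b)=a$, together with the standard equivalence between the second-order and first-order (Hamiltonian) formulations. No gaps.
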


We claim that this result is already interesting in the case when there is no exterior time-dependent potential $V$: While in the case without interaction ($\rho=0$) one only can prove the existence of at least one closed geodesic as all periodic orbits could be iterates of the same geodesic, in the case when $\rho\neq 0$ it can be checked that iterates of $t\mapsto q(t)$ no longer lead to solutions $t\mapsto (q(t),\varphi(t))$ of \begin{eqnarray*}
 \nabla_t^2 q(t) &=& - \nabla(\varphi*\rho)(t,q(t)),\\
 \del_t^2 \varphi(t,x) &=& \Delta\varphi(t,x)-\varphi(t,x)-\rho(x-q(t)).
\end{eqnarray*}
More precisely, it follows from the proof of \Cref{bounded} that a solution $t\mapsto (q(t),\varphi(t))$ could at most lead to a finite number of solutions by taking iterates of $t\mapsto q(t)$, since $\nabla(\varphi*\rho)(t,q(t))$ is bounded uniformly with respect to $(q,\varphi)$. In reality this result for instance establishes the existence of infinitely many periodic orbits of a charged particle on a $2$-sphere in an electric field when the interaction of the particle with its self-generated electric field is no longer neglected. In order to illustrate the nontriviality of the problem of establishing $C^0$-bounds, note that we currently do not know how to extent our result to the case when the magnetic Lorentz force would be included. Furthermore our result indeed crucially relies on the fact that the underlying space $\IT^d=\IR^d/(2\pi\IZ)^d$ is periodic: Informally speaking, if no space periodicity was assumed, then the energy that was transferred from the mechanical system to the field would disappear towards infinity and hence would have no chance to come back to the particle, see \cite{Kun} for a negative result for a closely related problem. \\

While the Hamiltonian equations for the particle and the Hamiltonian equations for the field are coupled via the interaction terms and hence cannot be solved individually, let us nonetheless look at both separately:
\begin{enumerate}
    \item Assuming first that $\varphi(t,x)$ is given and smooth and time-periodic with period $T$, it was first shown by Benci in \cite{Ben} that the number of smooth $T$-periodic solutions of the Hamiltonian equation \eqref{test} describing the motion of the particle is bounded from below by the $\IZ_2$-cuplength of $\Lambda^{\contr} Q$. Here it is crucially used that this special particle Hamiltonian function is obtained from a Lagrangian function using the Legendre transform. 
    \item Now assume that $q(t)$ is given and smooth and time-periodic with time period $T$. Since we are looking for wave functions $\varphi(t,x)$ which are time-periodic with period $T$ as well, we assume that $\varphi(t,x)$ and $f(t,x)=\rho(q(t)-x)$ can be written as Fourier series. It follows that the corresponding Fourier coefficients $\hat{\varphi}(m,n)$, $\hat{f}(m,n)$ must satisfy the equation $$\left(\left(\frac{T}{2\pi}\right)^2-\frac{m^2}{n^2+1}\right)\hat{\varphi}(m,n)=\frac{T^2}{(2\pi)^2 (n^2+1)}\hat{f}(m,n).$$ When $T^2/(2\pi)^2$ is irrational, it follows that the first factor is never zero; however there exists a subsequence of tuples $(m,n)$ for which it converges to zero. When $T^2/(2\pi)^2$ is Diophantine, then this however only happens with at most polynomial speed, while the Fourier coefficients $\hat{f}(m,n)$ converge to zero with exponential speed due to the smoothness of $f(t,x)$. This is sufficient to ensure that also the Fourier coefficients $\hat{\varphi}(m,n)$ converge to zero with exponential speed, which proves that also the Hamiltonian equations for the field have a smooth $T$-periodic solution, provided that $T^2/(2\pi)^2$ is a  Diophantine irrational number. 
\end{enumerate}

While the Hamiltonian function for the simple particle-field system is obtained from a Lagrangian function using the Legendre transform, it is actually not known to us whether the existence of periodic solutions for this special Lagrangian particle-field system has been established before using a generalization of Benci's method.   

\section{Properties of the particle-field Hamiltonian}\label{properties}

In order to prove \Cref{periodic-orbits}, we essentially combine our work in \cite{F} and \cite{FL} on pseudoholomorphic curves for infinite-dimensional Hamiltonian systems with the celebrated paper \cite{C} on pseudoholomorphic curves in cotangent bundles in order to prove an existence result for Floer curves in $T^*Q\times\IH$.\\

Apart from the fact that the underlying phase space $\widetilde{M}=T^*Q\times\IH$ is infinite-dimensional, as in \cite{F}, \cite{FL} we first need to deal with the fact that the field Hamiltonian $$H^A(u)=\frac{1}{2}\langle u_{\IH}, Au_{\IH}\rangle=\frac{1}{2}\<\varphi,B\varphi\>+\frac{1}{2}\<\pi,B\pi\>\,\,\textrm{with}\,\,u_{\IH}=\pi_{\IH}u=(\varphi,\pi)$$ is only defined on the dense subspace $\IH_{\frac{1}{2}}=H^1(\IT^d,\IR)\oplus H^1(\IT^d,\IR)$ of the symplectic Hilbert space $\IH$. \\

As in \cite[Prop. 2.1]{F}, \cite[Section 1]{FL}, one finds that this problem can be resolved by only working with the Hamiltonian flow $\phi^A_t$ of $H^A$, defined by $$\del_t|_{t=0}\phi^A_t(u) = X^A(u) = J A u_{\IH},$$ since it has much better properties: While it is trivial on $T^*Q$, for every fixed time $t\in\IR$ it extends to a unitary linear map on $\IH$, since $$\phi^A_t\cdot z_n = \exp(\sqrt{n^2+1}\cdot it)\cdot z_n.$$ In particular, for every chosen time period $T$ it follows that the time-$T$ map $\phi^A_T$ is a smooth symplectomorphism of $(\widetilde{M},\omega)$ which furthermore preserves the complex structure $J$ on $\widetilde{M}$. While the Hamiltonian flow $\phi^A_t$ is smooth with respect to the time coordinate $t$ on $\IH_{\infty}=C^\infty(\IT^d,\IR)\oplus C^\infty(\IT^d,\IR)$, we stress that on $\IH=H^{\frac{1}{2}}(\IT^d,\IR)\oplus H^{\frac{1}{2}}(\IT^d,\IR)$ it is only continuous, see also \cite[Prop. 2.5]{F}. \\

Defining for each $u:\IR\to T^*Q\times\IH$ a map $\bar{u}:\IR\to T^*Q\times\IH$ via  $\bar{u}(t)=\phi^A_{-t}\cdot u(t)$, it follows as in \cite[Prop. 2.5]{F} that $u$ solves $\partial_t u=JA\pi_{\IH}u+J\nabla F_t(u)$ with $u(t+T)=u(t)$ if and only if $\bar{u}$ solves $\partial_t\bar{u}=J\nabla G_t(\bar{u})$, $\bar{u}(t+T)=\phi^A_{-T}\cdot\bar{u}(t)$, where we define $G_t=F_{\prt,t}+G_{\inter,t}$ with $G_{\inter,t}=F_{\inter,t}\circ\phi^A_{-t}$. Writing $u_{\IH}*\rho=(\varphi*\rho,\pi*\rho)$, note that since $G_{\inter,t}(q,\varphi,\pi)=f_t((\phi^A_{-t}u_{\IH}*\rho)(q))=f_t(\phi^A_{-t}(u_{\IH}*\rho)(q))$, it follows that $G_{\inter,t}$ is still smooth with respect to the time coordinate $t$, using that $u_{\IH}*\rho\in\IH_{\infty}$. \\

Denote by $\PP(\phi^A_T,G)$ the set of $\phi^A_T$-periodic solutions $\bar{u}$ of $\del_t\bar{u}=J\nabla G_t(\bar{u})$ and for given $R>0$ let $\chi_R:\IR\to [0,1]$ denote a smooth cut-off function with $\chi_R(r)=1$ for $r\leq R$ and $\chi_R(r)=0$ for $r\geq R+1$.

\begin{lemma}\label{bounded}
There exists some $R_0>0$ such that $\displaystyle|\pi_{\IH}\bar{u}(t)|_1\leq R_0$ for all $\bar{u}\in\PP(\phi^A_T,G)$ and $t\in\IR$. In particular, we find $R_1>0$ such that $\PP(\phi^A_T,G)=\PP(\phi^A_T,\tilde{G})$ with $\tilde{G}_t=F_{\prt,t}+\tilde{G}_{\inter,t}$, $\tilde{G}_{\inter,t}=\tilde{F}_{\inter,t}\circ\phi^A_{-t}$ and $$\tilde{F}_{\inter,t}(u)=\chi_{R_1}(|u_{\IH}*\rho|_{C^3})\cdot F_{\inter,t}(u).$$
\end{lemma}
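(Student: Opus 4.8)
The plan is to exploit that, after the conjugation $\bar u(t)=\phi^A_{-t}u(t)$, the field part of the equation, $\del_t(\pi_{\IH}\bar u)=J_{\IH}\nabla_{\IH}G_{\inter,t}(\bar u)$, has a right-hand side which no longer involves the unbounded operator $A$ and is uniformly bounded in every member $\IH_h$ of the Hilbert scale. Combined with the twisted periodicity $\pi_{\IH}\bar u(T)=\phi^A_{-T}\pi_{\IH}\bar u(0)$, this reduces the desired bound to inverting $\phi^A_{-T}-\Id$ on the scale; the small divisors appearing there are controlled, with only a finite polynomial loss of derivatives, precisely because $(T/2\pi)^2$ is Diophantine, and that loss is harmless since the smoothness of $\rho$ puts the relevant inhomogeneity into $\IH_\infty$. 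Note that (F1)--(F2) play no role here, as they constrain only $F_{\prt,t}$, which is absent from the field equation.

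I would first observe that, differentiating $G_{\inter,t}(v)=f_t\big([\phi^A_{-t}(\pi_{\IH}v*\rho)](q)\big)$ in the field direction and using that $\phi^A_{-t}$ commutes with convolution by $\rho$, one finds that $\nabla_{\IH}G_{\inter,t}(v)$ is a combination --- with coefficients $\del_1f_t,\del_2f_t$ evaluated along $v$ --- of elements obtained by applying the unitary $\phi^A_t$ to $q$-translates of $B^{-1}\rho$. Since $\rho$ (hence $B^{-1}\rho$) has rapidly decaying Fourier coefficients, $Q$ is compact, and $\phi^A_\cdot$ is unitary on every $\IH_h$, these elements lie in $\IH_\infty$ with $\IH_h$-norm independent of $q$ and $t$; together with the boundedness of $\del_1f_t,\del_2f_t$ from (F3) this gives $c_h:=\sup\{\,|\nabla_{\IH}G_{\inter,t}(v)|_h:v\in\widetilde M,\ t\in\IR\,\}<\infty$ for every $h$. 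Integrating the field equation over one period and using $\pi_{\IH}\bar u(T)=\phi^A_{-T}\pi_{\IH}\bar u(0)$ then gives $(\phi^A_{-T}-\Id)\,\pi_{\IH}\bar u(0)=w$, where $w:=\int_0^T J_{\IH}\nabla_{\IH}G_{\inter,s}(\bar u(s))\,ds\in\IH_\infty$ and $|w|_h\le Tc_h$ for every $h$.

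The key step, which is also the main obstacle, is the small-divisor estimate. On $z_n$ the operator $\phi^A_{-T}-\Id$ acts by multiplication with $e^{-i\sqrt{n^2+1}\,T}-1$, which is nonzero for every $n$ because $(n^2+1)(T/2\pi)^2$ is irrational but which can be arbitrarily small; writing $\sigma:=(T/2\pi)^2$ and letting $m$ be the nearest integer to $\sqrt{(n^2+1)\sigma}$, one has $|e^{-i\sqrt{n^2+1}\,T}-1|\asymp\operatorname{dist}(\sqrt{(n^2+1)\sigma},\IZ)=|(n^2+1)\sigma-m^2|\,/\,(\sqrt{(n^2+1)\sigma}+m)$, and the Diophantine inequality for $\sigma$ with denominator $n^2+1$ bounds the numerator below by a constant times $(n^2+1)^{1-r}$ while the denominator is $O(\langle n\rangle)$; hence $|e^{-i\sqrt{n^2+1}\,T}-1|\ge c\langle n\rangle^{-\kappa}$ for some $c,\kappa>0$. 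Therefore $(\phi^A_{-T}-\Id)^{-1}$ is bounded $\IH_{h+\kappa}\to\IH_h$ for every $h$, so $\pi_{\IH}\bar u(0)=(\phi^A_{-T}-\Id)^{-1}w\in\IH_\infty$ with $|\pi_{\IH}\bar u(0)|_h\le C_h$ independent of $\bar u$; feeding this back into $\pi_{\IH}\bar u(t)=\pi_{\IH}\bar u(0)+\int_0^t J_{\IH}\nabla_{\IH}G_{\inter,s}(\bar u(s))\,ds$ yields $|\pi_{\IH}\bar u(t)|_h\le C_h+Tc_h$ for $t\in[0,T]$, hence for all $t$ by the twisted periodicity and the unitarity of $\phi^A_\cdot$. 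Taking $h=1$ produces $R_0$ (and arbitrary $h$ simultaneously yields the last assertion of \Cref{periodic-orbits}, that $u$ has image in $C^\infty(\IT^d,\IR)\oplus C^\infty(\IT^d,\IR)$).

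For the second assertion, since $\rho$ has rapidly decaying Fourier coefficients, convolution by $\rho$ maps $\IH$ continuously into $\IH_h$ for every $h$, and $\IH_h\hookrightarrow C^3\oplus C^3$ for $h$ large; with $\phi^A_\cdot$ unitary, the bound just obtained makes the quantity $|(\phi^A_{-t}\bar u(t))_{\IH}*\rho|_{C^3}$ governing the cut-off at most some constant $c_\ast$, uniformly in $t$ and in $\bar u\in\PP(\phi^A_T,G)$. Choosing $R_1>c_\ast$, the cut-off equals $1$ and $\chi_{R_1}'$ vanishes along every such $\bar u$, so $\nabla\tilde G_t=\nabla G_t$ there and $\PP(\phi^A_T,G)\subseteq\PP(\phi^A_T,\tilde G)$; the reverse inclusion follows by rerunning the bootstrap for $\tilde G$, the additional $\chi_{R_1}'$-term in $\nabla_{\IH}\tilde G_{\inter,t}$ being again bounded in every $\IH_h$ (it is supported where the argument of $f_t$ stays in a fixed ball, and the $\IH$-gradient of the cut-off variable, after the smoothing convolution, lies in $\IH_\infty$ with controlled norm), so that $\tilde G$-solutions obey an a priori $\IH$-bound too and, after enlarging $R_1$ once more, $\chi_{R_1}\equiv 1$ along every element of $\PP(\phi^A_T,G)\cup\PP(\phi^A_T,\tilde G)$. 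I expect the small-divisor bound to be the only genuinely nontrivial point: without the Diophantine hypothesis the inverse of $\phi^A_{-T}-\Id$ need not be bounded on any space of the scale, which is exactly where --- and why --- that assumption is needed, whereas the uniform $\IH_h$-estimates, the regularity bootstrap and the bookkeeping around the cut-off are routine.
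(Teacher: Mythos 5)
Your proposal is correct and uses essentially the same argument as the paper: both rest on the observation that $\nabla^{\IH} G_{\inter,t}$ is uniformly bounded in every $\IH_h$ thanks to the smoothness of $\rho$, together with the small-divisor lower bound for $\phi^A_T-\Id$ on the Hilbert scale coming from the Diophantine assumption, and you deduce from these the same estimate $R_0$ and then set $R_1=c'''' R_0$ via the embedding $\IH_1\ni u_{\IH}\mapsto u_{\IH}*\rho\in C^3$. The only difference is presentational — you phrase the key step as integrating the field equation over a period and inverting $\phi^A_{-T}-\Id$ on the inhomogeneity, whereas the paper phrases the same estimate as ``$\phi^A_T$ moves $u_{\IH}$ further than $\phi^G_T$ can move points'' — but these are equivalent.
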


\begin{proof} As in the proof of \cite[Theorem 9.2]{FL}, see also \cite[Lemma 9.1]{FL}, we show that, when the $\IH_1$-norm $|u_{\IH}|_1$ of $u_{\IH}=\pi_{\IH}u$ is too large, the map $\phi^A_T$ moves the point $u_{\IH}=(\varphi,\pi)$ further than $\phi_T^G$ can move points, so that $u_{\IH}$ cannot be a fixed point of the time-$T$ flow $\phi_T^H=\phi^A_T\circ\phi_T^G$ of the Hamiltonian vector field of $H_t=H^A+F_t$. As a consequence, multiplying with a cut-off function with cut-off region outside of this region will not alter the set of periodic points. Note that, since the particle Hamiltonian $F_{\prt,t}$ does not depend on $u_{\IH}$, it suffices to take only the interaction Hamiltonian $F_{\inter,t}$ into account.\par
Since $T^2/(2\pi)^2$ is assumed to be a Diophantine irrational number, it follows that there exists $c>0$ and $r>2$ such that $$\inf_{m\in\IN}\left|\left(\frac{T}{2\pi}\right)^2-\frac{m^2}{n^2+1}\right|\geq c\cdot (n^2+1)^{-r}.$$ Since $$\left(\frac{T}{2\pi}\right)^2-\frac{m^2}{n^2+1}=\left(\frac{T}{2\pi}-\frac{m}{\sqrt{n^2+1}}\right)\cdot\left(\frac{T}{2\pi}+\frac{m}{\sqrt{n^2+1}}\right),$$ and the second factor is approximately equal to $2\cdot T/(2\pi)$ whenever the first factor is close to zero, it follows that there also exists some $c'>0$ and $r'=r-\frac{1}{2}$ such that $$\inf_{m\in\IN}\left|T\cdot\sqrt{n^2+1}-m\cdot 2\pi\right|=2\pi\cdot\sqrt{n^2+1}\cdot\inf_{m\in\IN}\left|\frac{T}{2\pi}-\frac{m}{\sqrt{n^2+1}}\right|> c'\cdot (n^2+1)^{-r'}.$$ Together with $\phi^A_T\cdot z_n=\exp(i\cdot T\cdot\sqrt{n^2+1})\cdot z_n$, it follows with the small angle approximation that $$|\phi^A_T\cdot z_n - z_n|> c'\cdot (n^2+1)^{-r'}.$$ It follows that there exists $c''>0$ and $h_0>0$ such that $$|\phi^A_T(u_{\IH})-u_{\IH}|_h>c''\cdot |u_{\IH}|_{h-h_0} \,\,\textrm{for every}\,\, h\in\IR,$$ where $|\cdot|_h$ denotes the Hilbert space norm on $\IH_h=H^{h+\frac{1}{2}}(\IT^d,\IR)$. On the other hand, since
$F_{\inter,t}(q,u_{\IH})=f_t((u_{\IH}*\rho)(q))$ satisfies (F3) and $\rho\in C^{\infty}(\IT^d,\IR)$, it follows that the $\IH$-component $$\nabla^{\IH}F_{\inter,t}(q,u_{\IH})=B^{-1}\rho(q-\cdot)\cdot(\partial_1f_t((u_{\IH}*\rho)(q)),\partial_2f_t((u_{\IH}*\rho)(q)))$$ of the gradient of $F_{\inter,t}$
is bounded with respect to the $\IH_h$-norm for every $h\in\IR$. It follows that the same holds for the ${\IH}$-component of the gradient of $G_{\inter,t}$ since $\phi^A_t$ is a unitary map, and so for every $h\in\IR$ there exists $c_h'''>0$ with $$|\pi_{\IH}\phi^{G_{\inter}}_T(u)-\pi_{\IH}u|_h\leq c'''_h.$$ Choosing $h\in\IR$ such that $h-h_0=1$, we find that $|\pi_{\IH}\bar{u}(t)|_1\leq R_0$ for every $\bar{u}\in\PP(\phi^A_T,G)$ and $t\in\IR$ with $R_0:=c'''_h/c''$. The second statement follows from the fact that there exists $c''''>0$ such that $|u_{\IH}*\rho|_{C^3}\leq c''''\cdot |u_{\IH}|_1$ for every $u\in T^*Q\times\IH$. Hence, when we define $\tilde{G}_{\inter,t}(u):=\chi_{R_1}(|\pi_{\IH}u*\rho|_{C^3})\cdot G_{\inter,t}(u)$  with $R_1=c''''R_0$, and $\tilde{G}_t:=F_{\prt,t}+\tilde{G}_{\inter,t}$, then we find that $\PP(\phi^A_T,\tilde{G})=\PP(\phi^A_T,G)$. \end{proof}
Note that we can replace the $C^3$-norm by the $C^\kappa$-norm for any $\kappa$. This would lead to Floer curves of higher regularity in \Cref{compactness}. However, since we only require the Floer curves to be of class $C^1$, the $C^3$-norm suffices. It follows that instead of considering the original Hamiltonian $G_t=F_{\prt,t}+G_{\inter,t}$ from now on we can work with the Hamiltonian $\tilde{G}_t=F_{\prt,t}+\tilde{G}_{\inter,t}: T^*Q\times\IH\to\IR$ which has bounded support on $\IH$ in a weak sense. Note that in the language of \cite[Definition 3.3]{FL} we would call $G_{\inter,t}$ weakly $A$-admissible, while $\tilde{G}_{\inter,t}$ would be called $A$-admissible. In order to illustrate the benefit that we gained from passing from $G_t$ to $\tilde{G}_t$, we prove the following 

\begin{lemma} \label{G1G2}
$\tilde{F}_{\inter,t}$ and hence also $\tilde{G}_{\inter,t}$ has finite $C^3$-norm.
\end{lemma}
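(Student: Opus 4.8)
The plan is to restrict to the region where the cut-off $\chi_{R_1}$ does not vanish and then bound the remaining factors together with their derivatives by the chain and Leibniz rules. The structural starting point is that $\tilde{F}_{\inter,t}$ depends on $u=(q,p,\varphi,\pi)$ only through $q$ and through $u_\IH*\rho=(\varphi*\rho,\pi*\rho)$ — the cut-off factor is $\chi_{R_1}(|u_\IH*\rho|_{C^3})$ and the remaining factor is $f_t((u_\IH*\rho)(q))$ — so, writing $R_\rho u_\IH:=u_\IH*\rho$, we have $\tilde{F}_{\inter,t}(q,u_\IH)=\chi_{R_1}(|R_\rho u_\IH|_{C^3})\cdot f_t((R_\rho u_\IH)(q))$. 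In particular $\tilde{F}_{\inter,t}$ is independent of the momentum coordinate $p$ by (F3), so it is really a function on $Q\times\IH$ with $Q$ compact, and it vanishes outside $W:=\{u:|R_\rho u_\IH|_{C^3}\le R_1+1\}$. The key point that makes the whole thing work is that $\rho\in C^\infty(\IT^d,\IR)$ forces $\hat\rho(n)$ to decay faster than any polynomial, so by Cauchy--Schwarz $R_\rho$ is a bounded \emph{linear} operator $\IH\to C^k(\IT^d,\IR)\oplus C^k(\IT^d,\IR)$ for every $k$: convolution with $\rho$ is smoothing, and $|R_\rho v|_{C^k}\le c_k\,|v|_\IH$ for all $v$.

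With this in place I would bound the pieces separately on $W$, where $|R_\rho u_\IH|_{C^3}\le R_1+1$. First, in geodesic normal coordinates on the compact manifold $Q$, every $q$-derivative of $\tilde{F}_{\inter,t}$ of order $\le 3$ is, by the chain rule, a polynomial in the tangential derivatives of $R_\rho u_\IH$ of order $\le 3$ (bounded by $R_1+1$ on $W$) and in the partial derivatives of $f_t$ evaluated at $(R_\rho u_\IH)(q)$. Second, on $W$ the argument $(R_\rho u_\IH)(q)$ stays in a fixed compact set $K\subset\IR^2$, and since $f$ is $C^\infty$ and $T$-periodic in $t$ all its partial derivatives are bounded on $(\IR/(T\IZ))\times K$, uniformly in $t$. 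Third, each $u_\IH$-derivative falling on the factor $f_t((R_\rho u_\IH)(q))$ produces evaluation functionals of the form $v\mapsto(R_\rho v)^{(j)}(q)$ with $j\le 3$, bounded by $c_3\,|v|_\IH$. Combining these via the Leibniz and chain rules, every derivative of $\tilde{F}_{\inter,t}$ of order $\le 3$ is a universal polynomial in bounded quantities, hence bounded uniformly in $(q,u_\IH,t)$; this is $|\tilde{F}_{\inter,t}|_{C^3}<\infty$, uniformly in $t\in\IR/(T\IZ)$. For $\tilde{G}_{\inter,t}=\tilde{F}_{\inter,t}\circ\phi^A_{-t}$ I would simply note that $\phi^A_{-t}$ is the identity on $T^*Q$ and, for each $t$, a unitary Fourier multiplier on $\IH$ commuting with $R_\rho$; precomposition by a linear isometry leaves all the operator norms above unchanged, so $|\tilde{G}_{\inter,t}|_{C^3}=|\tilde{F}_{\inter,t}|_{C^3}<\infty$.

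The one step that is not routine bookkeeping — and where I expect the actual work to be — is the regularity, and the derivative estimates, of the cut-off factor $u_\IH\mapsto\chi_{R_1}(|R_\rho u_\IH|_{C^3})$: the supremum norm $|\cdot|_{C^3}$ on $C^3(\IT^d,\IR)\oplus C^3(\IT^d,\IR)$ is only Lipschitz, not $C^3$, so taken literally $\tilde{F}_{\inter,t}$ fails to be twice differentiable at some points. As anticipated in the remark following \Cref{bounded}, this is inessential: the only property of $|\cdot|_{C^3}$ used in the proof of \Cref{bounded} is the estimate $|u_\IH*\rho|_{C^3}\le\cst\cdot|u_\IH|_1$, and this remains true with $|\cdot|_{C^3}$ replaced by a Sobolev norm $\|\cdot\|_{H^s}$ with $s$ large enough that $H^s(\IT^d)\hookrightarrow C^3(\IT^d)$. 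For such a norm the square $\|\cdot\|_{H^s}^2$ is a smooth quadratic form, $R_\rho:\IH\to H^s\oplus H^s$ is still bounded, and $g\mapsto\chi_{R_1}(\|g\|_{H^s})$ is genuinely $C^\infty$ with all derivatives bounded (it is locally constant near the origin, and a smooth function of $\|g\|_{H^s}^2$ on the shell where $\chi_{R_1}'$ is supported, which avoids the origin). With this harmless change of cut-off the estimates of the previous paragraph become rigorous and yield a genuine $C^3$-bound; alternatively one keeps the $C^3$-cut-off and reads ``finite $C^3$-norm'' as boundedness of the (almost-everywhere defined) directional derivatives up to order three, which is all that is used later in \Cref{compactness}. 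Either way, I expect this point — rather than the otherwise mechanical chain- and Leibniz-rule computations — to be the real content of the proof.
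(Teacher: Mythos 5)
Your proof follows the same decomposition as the paper's: restrict to the region $\{|u_\IH*\rho|_{C^3}\le R_1+1\}$ where the cut-off is supported, use the smoothing property of convolution with the fixed $\rho\in C^\infty$ to bound the $q$-derivatives and the (linear, hence constant-gradient) $\IH$-derivatives of $(q,u_\IH)\mapsto(u_\IH*\rho)(q)$, invoke smoothness of $f_t$, and pass to $\tilde{G}_{\inter,t}$ by unitarity of $\phi^A_t$. What you add — and what the paper's proof silently steps over — is the observation that the map $u_\IH\mapsto|u_\IH*\rho|_{C^3}$ is only Lipschitz, since the supremum norm is not differentiable; the paper writes ``$f_t$ and the cut-off function are smooth'', which is true of $\chi_{R_1}:\IR\to[0,1]$ but not of the composite $u_\IH\mapsto\chi_{R_1}(|u_\IH*\rho|_{C^3})$ on the shell where $\chi_{R_1}'\neq 0$. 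Your two fixes are both viable: replacing $|\cdot|_{C^3}$ by an $H^s$-norm with $H^s(\IT^d)\hookrightarrow C^3(\IT^d)$ makes $\|\cdot\|_{H^s}^2$ a smooth quadratic form and the cut-off genuinely $C^\infty$, while preserving the only inputs used in \Cref{bounded} (namely $\|u_\IH*\rho\|_{H^s}\le\cst\cdot|u_\IH|_1$) and in \Cref{bubbling-off} (namely a uniform $C^3$-bound on $u_\IH*\rho$ over the support, which $H^s\hookrightarrow C^3$ supplies); alternatively one reads the conclusion as a bound on a.e.-defined directional derivatives, which suffices for the elliptic estimate quoted in \Cref{bubbling-off}. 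This is a legitimate refinement of the paper's argument, and worth recording; otherwise the two proofs are identical in substance.
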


\begin{proof} 
We establish the $C^3$-bound for $\tilde{F}_{\inter,t}(u)=\chi_{R_1}(|u_{\IH}*\rho|_{C^3})\cdot f_t((u_{\IH}*\rho)(q))$; the bound for $\tilde{G}_{\inter,t}$ follows from the fact that $\phi^A_t$ is a unitary map. Assuming boundedness of the $C^3$-norm of $u_{\IH}*\rho$, it is clear that the map $(q,u_{\IH})\mapsto(u_{\IH}*\rho)(q)$ has bounded $q$-derivatives up to order $3$. Since the $\IH$-gradient is given by $(B^{-1}\rho(q-\cdot),B^{-1}\rho(q-\cdot))$ and in particular does not depend on $u_{\IH}$, boundedness of all derivatives up to order $3$ follows. Since $f_t$ and the cut-off function are smooth and $\tilde{F}_{\inter,t}$ has bounded support in $\{u\vert |u_{\IH}*\rho|_{C^3}\leq R_1+1\}$, it follows that derivatives of $\tilde{F}_{\inter,t}$ up to order $3$ are bounded. \end{proof}

Set $\phi:=\phi^A_T$. The symplectic action $\AA_{\phi}^{\tilde{G}}$ of a $\phi$-periodic solution $\bar{u}$ of $\del_t\bar{u}=J\nabla\tilde{G}_t(\bar{u})$ is defined as the symplectic action of the corresponding $T$-periodic solution $u=(u_M,u_{\IH})$ of $\del_t u=J\nabla\tilde{H}_t(u)$ with $\tilde{H}_t=H^A+\tilde{F}_t$ given by $$\AA^{\tilde{H}}(u)=\int\tilde{u}^*\omega-\int_0^T \tilde{H}_t(u)\,dt\,\textrm{with}\,\,\tilde{u}:D^2\to T^*Q\times\IH,\,\tilde{u}(e^{2\pi it})=u(t).$$ Note that this is equal to 
$$\int_0^T\left(\lambda_M(\del_t u_M)-\tilde{F}_t(u)\right)\;dt\,+\,\int_0^T\left(\<\pi,\del_t\varphi\>-H_{\field}(\varphi,\pi)\right)\;dt.$$ Denote by $\PP_{\leq a}(\phi,\tilde{G})$ the set of $\phi$-periodic orbits with symplectic action less than or equal to $a\in\IR$. 

\begin{lemma}\label{p-cutoff}
For every $a\in\IR$ there exists $R_2>0$ such that $\PP_{\leq a}(\phi,\bar{G})=\PP_{\leq a}(\phi,\tilde{G})$ with $\bar{F}_{\inter,t}(q,p,u_{\IH})=\chi_{R_2}(\ln|p|)\cdot\tilde{F}_{\inter,t}(q,u_{\IH})$ and $\bar{G}_{\inter,t}=\bar{F}_{\inter,t}\circ\phi^A_{-t}$, $\bar{G}_t=F_{\prt,t}+\bar{G}_{\inter,t}$.
\end{lemma}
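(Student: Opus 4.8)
The plan is to show that cutting off $\tilde F_{\inter,t}$ in the region where $|p|$ is large does not destroy or create periodic orbits of action at most $a$, by establishing an a priori $C^0$-bound on the $p$-component of any $\bar u\in\PP_{\leq a}(\phi,\tilde G)$ (and, symmetrically, of any $\bar u\in\PP_{\leq a}(\phi,\bar G)$) that is uniform in the cut-off parameter. First I would pass back to the $T$-periodic solution $u=(q,p,\varphi,\pi)$ of $\del_t u=J\nabla\tilde H_t(u)$ corresponding to $\bar u$; by \Cref{bounded} we already control $|\pi_{\IH}u(t)|_1\le R_0$ for all $t$, so the interaction terms $\nabla(\varphi*\rho)(q)$, $\nabla(\pi*\rho)(q)$ and the coefficients $\del_{1,2}f_t$ are all bounded uniformly, independently of the orbit. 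The only term that can be large is the momentum $p$, and the mechanism controlling it is exactly the asymptotic quadraticity hypotheses (F1), (F2) on $F_{\prt,t}$, combined with the action bound — this is the argument of \cite{C} (and of \cite[Section 9]{FL} in the infinite-dimensional setting).

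The key steps, in order, are as follows. \emph{Step 1:} Using the formula for $\AA^{\tilde H}(u)$ as
$$
\int_0^T\bigl(\lambda_M(\del_t u_M)-\tilde F_t(u)\bigr)\,dt\;+\;\int_0^T\bigl(\<\pi,\del_t\varphi\>-H_{\field}(\varphi,\pi)\bigr)\,dt,
$$
observe that the field part of the action is bounded in absolute value by a constant depending only on $R_0$ (since $|\pi_{\IH}u|_1\le R_0$), and that $\tilde F_{\inter,t}$ is bounded by \Cref{G1G2}; hence $\AA^{\tilde H}(u)\le a$ forces a bound
$$
\int_0^T\bigl(\lambda_M(\del_t u_M)-F_{\prt,t}(u_M)\bigr)\,dt\;\le\;a+C(R_0)
$$
for an explicit constant $C(R_0)$. \emph{Step 2:} Along the Hamiltonian flow, $\lambda_M(\del_t u_M)=p\,\del_t q=p\,\nabla_p F_{\prt,t}+p\,\nabla_pF_{\inter,t}$; under (F3) the term $\nabla_pF_{\inter,t}$ vanishes (it depends only on $(\varphi*\rho)(q),(\pi*\rho)(q)$), so $\lambda_M(\del_t u_M)-F_{\prt,t}=dF_{\prt,t}(q,p)\cdot p\tfrac{\del}{\del p}-F_{\prt,t}$, and (F1) turns the integral bound from Step 1 into $c_0\int_0^T|p(t)|^2\,dt\le a+C(R_0)+c_1T$. \emph{Step 3:} Upgrade this $L^2$-in-time bound to a pointwise bound: from $\nabla_tp=-\nabla_qF_{\prt,t}-\del_1f_t\,\nabla(\varphi*\rho)(q)-\del_2f_t\,\nabla(\pi*\rho)(q)$ and (F2) one gets $|\nabla_tp|\le c_2(1+|p|)+C(R_0)$, and combining $\int_0^T|p|^2$ with this growth of $\nabla_tp$ (via the mean value theorem / Grönwall on the interval of length $T$) yields $\sup_t|p(t)|\le P(a,R_0)$ for a constant independent of the orbit. \emph{Step 4:} Set $R_2:=\max\{1,\ln P(a,R_0)\}$ and recall $\chi_{R_2}(\ln|p|)=1$ for $|p|\le e^{R_2}$; then on the image of every $u\in\PP_{\leq a}(\phi,\tilde G)$ we have $\bar F_{\inter,t}=\tilde F_{\inter,t}$, so these orbits are exactly the action-$\le a$ orbits of $\bar G$. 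For the reverse inclusion one runs the identical estimate with $\bar F_{\inter,t}$ in place of $\tilde F_{\inter,t}$: since $0\le\chi_{R_2}\le1$ the bounds of \Cref{bounded}, \Cref{G1G2} and Steps 1--3 hold verbatim (the cut-off only shrinks the relevant quantities and $\nabla_pF_{\inter,t}$ still vanishes, while the extra $p$-derivative landing on $\chi_{R_2}(\ln|p|)$ contributes a term supported in $e^{R_2}\le|p|\le e^{R_2+1}$, hence bounded), giving the same $P(a,R_0)$ and hence $\chi_{R_2}(\ln|p|)\equiv1$ along such orbits as well.

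The main obstacle is Step 3, the passage from the time-integrated bound $\int_0^T|p|^2\,dt$ to a genuine $C^0$-bound $\sup_t|p(t)|$: this is where one must use (F2) to control the growth of $\nabla_tp$ and, more subtly, be careful that the estimate is uniform over the cut-off family $\bar G$. In the $\bar G$ case the derivative $\del_q$ hitting $\chi_{R_2}(\ln|p|)$ produces, through $\del_t\pi$ and $\del_tp$, terms involving $\chi_{R_2}'$ which are a priori of size $\sim 1/|p|$ times bounded quantities, hence harmless on the support $|p|\sim e^{R_2}$; verifying that this genuinely does not spoil the Grönwall-type argument — i.e. that all constants can be chosen before fixing $R_2$ — is the delicate bookkeeping point, but it is exactly parallel to the cotangent-bundle estimate in \cite[Main Theorem 1.1]{C} once \Cref{bounded} has removed the field variables from the discussion.
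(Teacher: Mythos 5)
Your overall strategy coincides with the paper's: bound the field part of the action using \Cref{bounded}, use the Liouville vector field identity $\lambda_M(\del_t u_M)=dF_t\cdot p\frac{\del}{\del p}$ together with (F1) to obtain an $L^2$-in-time bound on $p$, upgrade this to a $C^0$-bound, and then choose $R_2$ large enough that the cutoff $\chi_{R_2}(\ln|p|)$ is identically $1$ along all relevant orbits. Your Gr\"onwall route from $L^2$ to $C^0$ is a perfectly reasonable substitute for the paper's $H^1$-plus-Sobolev argument (the latter follows \cite[Lemma 5.3]{C}).

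There is, however, a real gap in the logic of Step 4, and it is exactly at the point you flag as the ``delicate bookkeeping point.'' You derive $P(a,R_0)$ as a $C^0$-bound for $\tilde G$-orbits, fix $R_2:=\max\{1,\ln P(a,R_0)\}$, and then claim that rerunning Steps 1--3 for $\bar G$ gives ``the same $P(a,R_0)$.'' That is not true, and the parenthetical assertion that ``$\nabla_pF_{\inter,t}$ still vanishes'' is also false for $\bar F_{\inter,t}$: one has $d\bar F_{\inter,t}\cdot p\frac{\del}{\del p}=\chi_{R_2}'(\ln|p|)\cdot\tilde F_{\inter,t}$, which is nonzero on $\{e^{R_2}\leq|p|\leq e^{R_2+1}\}$ and contributes an extra $2T\|\tilde F_{\inter}\|_{C^0}$ (say) to the action estimate. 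Thus the $C^0$-bound $P'(a,R_0)$ for $\bar G$-orbits is strictly larger than $P(a,R_0)$, and with $R_2=\ln P(a,R_0)$ already fixed you cannot exclude $\bar G$-orbits of action $\leq a$ with $\sup_t|p(t)|\in(P,P']$, which need not be $\tilde G$-orbits. The paper avoids this by estimating directly for $\bar G$ from the start, using the $R_2$-independent bound $|d(\chi_{R_2}\circ\ln\circ|\cdot|)\cdot p\frac{\del}{\del p}|\leq 2$ to produce a single $R_2$-uniform constant $P'(a,R_0)$, and only then choosing $R_2$ with $e^{R_2}>P'(a,R_0)$; since the $\tilde G$-estimate yields an even smaller bound, both inclusions follow. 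The fix to your argument is small --- derive the uniform bound for $\bar G$ first, then pick $R_2$ --- but as written the order of quantifiers is circular.
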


\begin{proof} Let $\bar{u}$ denote a $\phi$-periodic orbit in $\PP_{\leq a}(\phi,\bar{G}_t)$ with corresponding $T$-periodic orbit $u=(q,p,\varphi,\pi)$ of $\bar{H}_t$. Since the $\IH_1$-norm of $(\varphi,\pi)$ is bounded by \Cref{bounded}, it follows that there exists $a_0\in\IR$ such that  $$\left|\int_0^T\left(\<\pi,\del_t\varphi\>-H_{\field}(\varphi,\pi)\right)\;dt\right|\leq a_0.$$ For this it suffices to observe that $\<\pi,\del_t\varphi\>-H_{\field}(\varphi,\pi)$ is given by $$\frac{1}{2}\<\pi,B\pi+2\del_2 f_t\cdot B^{-1}\rho(q-\cdot)\>-\frac{1}{2}\<\varphi,B\varphi\>.$$ With this we can compute as in \cite[Lemma 5.3]{C} 
\begin{eqnarray*}
a+a_0 &\geq& \int_0^T\left(\lambda_M(\del_t u_M)-\bar{F}_t(u)\right)\;dt\\ &=& \int_0^T \left(\omega_M(p\frac{\del}{\del p},J\nabla\bar{F}_t(u))-\bar{F}_t(u)\right)\;dt\\ &=& \int_0^T \left(d\bar{F}_t(u)\cdot p\frac{\del}{\del p}-\bar{F}_t(u)\right)\;dt\\ &=& \int_0^T \left(dF_{\prt,t}(u_M)\cdot p\frac{\del}{\del p}-F_{\prt,t}(u_M)+d\bar{F}_{\inter,t}(u)\cdot p\frac{\del}{\del p}-\bar{F}_{\inter,t}(u)\right)\;dt\\
&\geq& c_0\int_0^T|p|^2\;dt\,-\,\left(c_1+3\|\bar{G}_{\inter}\|_{C^0}\right)\cdot T,
\end{eqnarray*}
where we use that $F_{\prt,t}$ satisfies (F1) and $\left|d(\chi_{R_2}\circ\ln\circ |\cdot|)\cdot p\frac{\del}{\del p}\right|\leq 2$. Furthermore, we clearly crucially use that the $C^0$-norm of $\bar{G}_{\inter}$ (which equals the $C^0$-norm of $\tilde{G}_{\inter}$) is bounded. 
As in \cite{C} we see that the $L^2$-norm of $t\mapsto (q(t),p(t))$ is bounded. Since the gradient $\nabla\bar{F}_{\inter,t}$ is bounded with respect to the $C^0$-norm by \Cref{G1G2}, it further follows as in \cite[Section 5]{C} that there exists $c_3\geq 0$ such that $|\nabla\bar{F}_t(u)|\leq 2c_2|p|+c_3$ with $c_2\geq 0$ from (F2). As a consequence we get as in the proof of \cite[Lemma 5.3]{C} that $t\mapsto (q(t),p(t))$ is bounded  even with respect to the $H^1$-norm; using the Sobolev embedding theorem we can conclude that the same holds true for the $C^0$-norm. Since the $p$-component of every $T$-periodic solution of $\del_t u=J\nabla\bar{H}_t(u)$ and hence of $\del_t\bar{u}=J\nabla\bar{G}_t(\bar{u})$ of action $\leq a$ is hence uniformly bounded, the claim follows. \end{proof}

Summarizing, we find that for every given action bound $a\in\IR$ we can find $R_1,R_2>0$ such that $G_t=F_{\prt,t}+G_{\inter,t}$ and $\bar{G}_t=F_{\prt,t}+\bar{G}_{\inter,t}$ have the same set of $\phi$-periodic orbits of action less than or equal to $a$. 

\section{Floer curves in infinite dimensions}

In what follows we fix $R_1>0$ as in \Cref{bounded}, whereas $R_2>0$ will be determined below. As in finite dimensions, it follows that $\PP(\phi,\bar{G})$ with $\phi=\phi^A_T$ agrees with the set of critical points $\Crit(\AA_{\phi}^{\bar{G}})$ of the symplectic action functional $\AA=\AA_{\phi}^{\bar{G}}$ on $H^1_{\phi}(\IR,T^*Q\times\IH)$ for the time-dependent Hamiltonian $\bar{G}_t$, where $H^1_{\phi}(\IR,T^*Q\times\IH)$ contains all $H^1$-maps $\bar{u}:\IR\to T^*Q\times\IH$ with $\bar{u}(t+T)=\phi^A_{-T}\bar{u}(t)$. As in \cite{AS}, \cite{C}, \cite{Sch} and \cite{F}, \cite{FL} we follow the idea of A. Floer in \cite{Fl} to study flow lines of the gradient $\nabla\AA_{\phi}^{\bar{G}}$ of the action functional  with respect to the $L^2$-metric on $H^1_{\phi}(\IR,T^*Q\times\IH)$ given by the canonical Riemannian metric $\langle\cdot,\cdot\rangle$ on $\widetilde{M}=T^*Q\times\IH$. The reason why Floer preferred to choose the $L^2$-gradient over the more natural $H^1$-gradient follows from the observation that the gradient flow equation $\del_s\widetilde{u}=\nabla\AA_{\phi}^{\bar{G}}(\widetilde{u})$ for $\widetilde{u}:\IR\to H^1_{\phi}(\IR,T^*Q\times\IH)$ is equivalent to the perturbed Cauchy-Riemann equation $\CR_{\bar{G}}(\widetilde{u})=\del_s\widetilde{u}+J(\widetilde{u})\del_t\widetilde{u}+\nabla {\bar{G}}_t(\widetilde{u})=0$, where we now view $\widetilde{u}$ as a map from $\IR^2$ to $\widetilde{M}$ satisfying $\widetilde{u}(s,t+T)=\phi^A_{-T}\widetilde{u}(s,t)$ for all $(s,t)\in\IR^2$. Note that this is an infinite-dimensional analogue of the perturbed Cauchy-Riemann equation used to define Floer homology for general symplectomorphisms in \cite{DS}. The following main theorem of this paper is an analogue of \cite[Theorem 10.4]{FL}, see also \cite[Theorem 3.4]{F}.

\begin{theorem}\label{main}
Assume that there exist $\theta_1,\ldots,\theta_N\in H^{*\neq 0}(\Lambda^{\contr} Q,\IZ_2)$ with $\theta_1\cup\ldots\cup\theta_N\neq 0$. Then there exist $N$ maps $\widetilde{u}=\widetilde{u}_1,\ldots,\widetilde{u}_N:\IR^2\to T^*Q\times\IH_{\infty}\subset T^*Q\times\IH$ satisfying 
the Floer equation and $\phi_T^A$-periodicity condition
\begin{align*}
\del_s\widetilde{u}+J(\widetilde{u})\del_t\widetilde{u}+\nabla \bar{G}_t(\widetilde{u})=0,\qquad\widetilde{u}(s,t+T)=\phi_{-T}^A\widetilde{u}(s,t)
\end{align*}
with $\bar{G}_t(q,p,u_{\IH})=F_{\prt,t}(q,p)+\chi_{R_2}(\ln|p|)\cdot\chi_{R_1}(|u_{\IH}*\rho|_{C^3})\cdot G_{\inter,t}(q,u_{\IH})$ and $R_1>0$ as in \Cref{bounded}.
When $R_2>0$ is chosen large enough, for every $\alpha=1,\ldots,N$ the Floer curve $\widetilde{u}_{\alpha}$ connects two different solutions $\bar{u}=\bar{u}^-_{\alpha},\bar{u}^+_{\alpha}: \IR\to T^*Q\times\IH_{\infty}$ of 
\begin{align}
\label{ghameqn}
\del_t\bar{u}=X_t^G(\bar{u}),\qquad \bar{u}(t+T)=\phi^A_{-T}(\bar{u}(t))
\end{align}
with $G_t(q,p,u_{\IH})=F_{\prt,t}(q,p)+G_{\inter,t}(q,u_{\IH})$ in the sense that there exist sequences $s_{\alpha,n}^\pm\in\IR$ with $s_{\alpha,n}^\pm\to\pm\infty$ as $n\to\infty$ such that
\begin{align*}
\lim_{n\to\infty}\widetilde{u}_\alpha(s_{\alpha,n}^-,t)=\bar{u}^-_\alpha(t),\qquad\lim_{n\to\infty}\widetilde{u}_\alpha(s_{\alpha,n}^+,t)=\bar{u}^+_{\alpha}(t).
\end{align*}
Furthermore, since for the symplectic actions we have 
\begin{align*}
\mathcal{A}(\bar{u}^-_1)<\mathcal{A}(\bar{u}^+_1)\leq\mathcal{A}(\bar{u}^-_2)<\ldots<\mathcal{A}(\bar{u}^+_{N-1})\leq\mathcal{A}(\bar{u}^-_N)<\mathcal{A}(\bar{u}^+_N),    
\end{align*}
it follows that there are at least $N+1$ mutually different solutions of \eqref{ghameqn}.
\end{theorem}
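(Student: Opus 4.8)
The plan is to follow the by-now standard strategy of Floer-theoretic cuplength estimates (originating in Hofer, then Schwarz, and in the cotangent bundle setting carried out in~\cite{C}), adapted to the infinite-dimensional setting via the reductions of the previous sections. The first step is to replace the original Hamiltonian $G_t$ by the cut-off Hamiltonian $\bar G_t$ of \Cref{p-cutoff}, which — after fixing an action window $[a^-,a^+]$ large enough to contain all the periodic orbits that will appear — has the same low-action periodic orbits and, crucially by \Cref{bounded} and \Cref{G1G2}, has a genuinely (weakly) bounded interaction term with bounded $C^3$-norm. For $\bar G_t$ the action functional $\AA=\AA^{\bar G}_\phi$ on $H^1_\phi(\IR,T^*Q\times\IH)$ is well-behaved: its critical points are the $\phi$-periodic orbits, and its negative $L^2$-gradient flow is the perturbed Cauchy–Riemann equation $\CR_{\bar G}(\ut)=0$. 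I would then invoke the infinite-dimensional Gromov–Floer compactness theorem (an analogue of the compactness results in~\cite{F},~\cite{FL}, using the Diophantine condition to handle small divisors and smoothness of $\rho$ for the a priori estimates on the $\IH$-component) together with the $C^0$-bounds on the $T^*Q$-component inherited from~\cite{C} via \Cref{p-cutoff}, to conclude that moduli spaces of finite-energy Floer cylinders between periodic orbits in the action window are compact up to breaking, and that all Floer curves in fact take values in $T^*Q\times\IH_\infty$.

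The second, homological step is the construction of the cuplength lower bound itself. Here one works with the filtered Floer homology $\HF^{(a^-,a^+]}(\bar G)$ (equivalently, with a Morse-theoretic model on $\Lambda^{\contr}Q$ once one identifies Floer homology of the cotangent factor with the singular homology of the loop space, as in~\cite{AS},~\cite{C}); the point is that $\HF_*(\bar G)\cong \H_*(\Lambda^{\contr}Q;\IZ_2)$ carries a module structure over the cohomology ring $\H^*(\Lambda^{\contr}Q;\IZ_2)$ via the pair-of-pants / cap product, compatible with the action filtration in the sense that capping with a class $\theta\in \H^k(\Lambda^{\contr}Q;\IZ_2)$ strictly lowers (or does not raise) the action level unless $\theta$ is a unit. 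Given the cup product $\theta_1\cup\cdots\cup\theta_N\neq 0$, a spectral-number argument (as in~\cite{Sch} in finite dimensions, and~\cite[Theorem 10.4]{FL} in the present setting) produces a strictly increasing chain of action values $c(\theta_1\cup\cdots\cup\theta_j)$, each realised by a periodic orbit, and interpolating Floer continuation/cap-product cylinders connecting them; renaming these orbits $\bar u^\pm_\alpha$ and inserting them into the chain of inequalities $\AA(\bar u^-_1)<\AA(\bar u^+_1)\le\AA(\bar u^-_2)<\cdots$ gives exactly the statement of \Cref{main}, and the $N+1$ distinct action values force $N+1$ distinct orbits.

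Finally, to deduce the count in \Cref{periodic-orbits}: by the reduction $u\mapsto\bar u=\phi^A_{-t}u$ of Section~\ref{properties}, solutions of the coupled particle-field system are in bijection with elements of $\PP(\phi^A_T,G)$, which for every fixed action bound coincides with $\PP_{\le a}(\phi,\bar G)$ by \Cref{bounded} and \Cref{p-cutoff}; choosing $a$ large enough (larger than the top spectral value $c(\theta_1\cup\cdots\cup\theta_N)$, which is finite) the $N+1$ orbits produced by \Cref{main} are genuine $T$-periodic solutions of the original system, and they lie in $T^*Q\times\IH_\infty=T^*Q\times C^\infty(\IT^d,\IR)^{\oplus 2}$ by the elliptic-regularity/bootstrapping statement built into the compactness theorem. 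Taking the supremum over all admissible cup-length data $\theta_1,\ldots,\theta_N$ yields the bound $c\ell_{\IZ_2}(\Lambda^{\contr}Q)$; if $\pi_1(Q)$ is finite, then $\Lambda^{\contr}Q$ has the homotopy type of the full free loop space and its $\IZ_2$-cohomology is infinite-dimensional with arbitrarily long nonzero cup products (already for $Q=S^2$ via the standard computation), so the count is infinite.

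I expect the main obstacle to be neither of the two steps above in isolation but their interface: making the infinite-dimensional Gromov–Floer compactness genuinely compatible with the action filtration and the cap-product module structure, i.e.\ ensuring that the a priori $\IH_1$-estimates (from the Diophantine small-divisor control and the smoothness of $\rho$) hold uniformly over all the moduli spaces entering the spectral-number argument, and that the cut-offs $\chi_{R_1},\chi_{R_2}$ — which depend on the chosen action window — can be fixed once and for all before running the homological machinery without destroying transversality or the identification with loop-space homology. This is precisely the package assembled in~\cite{FL} for aspherical $M$; the work here is to check it survives replacing the closed aspherical $M$ by the cotangent bundle $T^*Q$, for which the relevant inputs (the loop-space identification and the $C^0$-bounds on the base) are supplied by~\cite{C}.
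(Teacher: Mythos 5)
Your overall plan has the right ingredients — the cut-offs from Lemmas \ref{bounded}--\ref{p-cutoff}, the small-divisor input for compactness, the $C^0$-bounds borrowed from \cite{C}, and the reference to \cite[Theorem~10.4]{FL} — but the homological engine you propose to run is not the one the paper runs, and as stated it has a genuine gap. You propose to build a filtered Floer homology $\HF^{(a^-,a^+]}(\bar G)$, endow it with a pair-of-pants / cap-product module structure over $\H^*(\Lambda^{\contr}Q;\IZ_2)$, identify it with $\H_*(\Lambda^{\contr}Q;\IZ_2)$, and then extract $N+1$ distinct spectral numbers. None of these three steps is established for the particle-field system $T^*Q\times\IH$: defining the homology requires transversality of moduli spaces of Floer cylinders, which is never claimed (only compactness up to breaking is proved); the module structure requires transversal pair-of-pants moduli spaces and gluing; and the Viterbo/Abbondandolo--Schwarz isomorphism to loop-space homology is a theorem about $T^*Q$, not about $T^*Q\times\IH$. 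The paper deliberately avoids all three.

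What the paper actually does is a moduli-space-level cuplength argument in the style of Cieliebak \cite[Theorem~7.6]{C}, without ever defining Floer homology. It introduces the finite-dimensional truncations $\IH^k$, the truncated moduli spaces $\MM^{k,\leq a}_{\tau}$ for a Floer equation with a $\tau$-dependent cut-off $\sigma_\tau$ in front of the interaction gradient, and $N$ evaluation maps $\ev_{\alpha,k,\tau}:\MM^{k,\leq a}_{\tau}\to\Lambda^{\contr}Q$ at $N$ shifted time slices. Proposition~\ref{cuplength} proves $\ev_{1,k,\tau}^*\theta_1\cup\cdots\cup\ev_{N,k,\tau}^*\theta_N\neq0$ by a two-step argument: at $\tau=0$ the Floer equation decouples into the cotangent-bundle Floer equation for $F_{\prt,t}$ plus an unperturbed Cauchy--Riemann equation on $\IH^k$ whose only bounded solution with the asymptotics $(*3)$ is zero, so $\MM^{k,\leq a}_0$ is literally Cieliebak's moduli space and \cite[Theorem~7.6]{C} applies verbatim; then a finite-dimensional homotopy/cobordism argument (powered by the compactness from Lemma~\ref{bubbling-off} and the $C^0$-bounds) transports the nonvanishing to all $\tau\geq0$. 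The diagonal limit $\tau=k\to\infty$, together with Lemma~\ref{small-divisors} (the Diophantine small-divisor control on the normal components $\widetilde{u}^{k,\ell}_\perp$) and Lemma~\ref{finite-dim} (uniform $C^2$-convergence of $\nabla\bar G^k_{\inter,t}$ and exponential Fourier decay from smoothness of $\rho$), then yields the $N$ infinite-dimensional Floer curves $\widetilde{u}_\alpha$ connecting the orbits $\bar u^\pm_\alpha$ with the stated action inequalities. So the two structural ideas missing from your proposal are the finite-dimensional $\IH^k$-approximation (which makes the transversality-free Cieliebak argument available) and the $\tau$-homotopy that reduces the cuplength nonvanishing to the uncoupled cotangent-bundle case.
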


For the proof we follow the strategy to combine the existence of Floer curves in cotangent bundles proven in \cite{C} with the infinite-dimensional Gromov-Floer compactness result from \cite{F}, \cite{FL}.\\

As a starting point we first approximate our infinite-dimensional Hamiltonian system by finite-dimensional ones. \\

For every $k\in\IN$ let $\IH^k$ denote the finite-dimensional subspace of $\IH$ which is spanned by all $e_n^{\pm}$ with $|n|=\sqrt{n_1^2+\ldots+n_d^2}\leq k$. First note that as in \cite{FL}, see also \cite[Prop. 2.1]{F}, the flow $\phi^A_t$ of the field Hamiltonian $H^A$ restricts to a unitary linear map on each $\IH^k$ since $\phi^A_t\cdot z_n=\exp(it\cdot\sqrt{n^2+1})\cdot z_n$. Here we use that $(e_n^{\pm})_{n\in\IZ^d}$ is a complete eigenbasis for $A$ with real eigenvalues. For every $k\in\IN$ let $\bar{G}^k_{\inter,t}=\bar{G}_{\inter,t}\circ\pi_k:T^*Q\times\IH\to\IR$ denote the Hamiltonian obtained by composing $\bar{G}_{\inter,t}$ with the projection $\pi_k: T^*Q\times\IH\to T^*Q\times\IH^k$. Furthermore note that for every fixed $t\in\IR$, $u\in T^*Q\times\IH$ the $\IH$-gradient $\nabla^{\IH}\bar{G}_{\inter,t}(u)$ can be expanded into a Fourier series, $$\nabla^{\IH}\bar{G}_{\inter,t}(u)=\sum_{n\in\IZ^d} \widehat{\nabla^{\IH}\bar{G}_{\inter,t}(u)}(n)\cdot z_n.$$ Then we have the following analogue of \cite[Lemmata 2.3 and 2.4]{F}, \cite[Lemma 3.2 and Lemma 6.1]{FL}  about finite-dimensional approximation.

\begin{lemma}\label{finite-dim} 
The gradients $\nabla\bar{G}^k_{\inter,t}$, $k\in\IN$ converge uniformly with their derivatives up to order $2$ to the gradient $\nabla\bar{G}_{\inter,t}$. Furthermore, for all $\delta\in\IN$ there exists $C_{\delta}>0$ such that $$|\widehat{\nabla^{\IH}\bar{G}_{\inter,t}(u)}(n)|\leq C_{\delta}\cdot |n|^{-\delta}$$ for all $t\in\IR$ and $u\in T^*Q\times\IH$. \end{lemma}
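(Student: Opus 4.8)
The plan is to write both gradients explicitly and then read off the two assertions from the smoothing effect of convolution with the smooth bump function $\rho$, together with the fact that the field flow $\phi^A_t$ is unitary and diagonal in the basis $(z_n)$. Abbreviating $v=\phi^A_{-t}u$ (so $v_M=(q,p)$, $v_{\IH}=\phi^A_{-t}u_{\IH}$), we have $\bar G_{\inter,t}(u)=\chi_{R_2}(\ln|p|)\,\chi_{R_1}(|v_{\IH}*\rho|_{C^3})\,f_t\bigl((v_{\IH}*\rho)(q)\bigr)$, and using the identity $(\varphi*\rho)(q)=\<\varphi,B^{-1}\rho(q-\cdot)\>$, the chain rule, and the fact that the $\IH$-adjoint of $d\phi^A_{-t}$ is $\phi^A_t$, one finds that $\nabla^{\IH}\bar G_{\inter,t}(u)$ is the sum of a main term
\[
\chi_{R_2}(\ln|p|)\,\chi_{R_1}(|v_{\IH}*\rho|_{C^3})\cdot\phi^A_t\bigl(\partial_1 f_t\cdot B^{-1}\rho(q-\cdot),\ \partial_2 f_t\cdot B^{-1}\rho(q-\cdot)\bigr)
\]
and a cut-off term $\chi_{R_2}(\ln|p|)\,\chi_{R_1}'(|v_{\IH}*\rho|_{C^3})\,f_t\bigl((v_{\IH}*\rho)(q)\bigr)\cdot\phi^A_t\bigl(\nabla^{\IH}|v_{\IH}*\rho|_{C^3}\bigr)$, while $\nabla^M\bar G_{\inter,t}(u)=\nabla^M\bar F_{\inter,t}(v)$ is obtained by differentiating the scalar factors in $(q,p)$ only. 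All scalar prefactors here ($\chi_{R_1},\chi_{R_2}$ and their derivatives, $\partial_i f_t$, and $f_t$ itself on the cut-off region) are bounded uniformly in $t$ and $u$, since $\chi_{R_1}$ confines $(v_{\IH}*\rho)(q)$ to a fixed bounded subset of $\IR^2$ and $f_t$ is smooth and $T$-periodic; and if one wants the cut-off — hence all the maps above — to depend smoothly on $u$, one replaces $|\cdot|_{C^3}$ by a higher-order norm of the same type, which by the remark following \Cref{bounded} is harmless.

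For the Fourier-decay estimate, observe that every $\IH$-valued factor above lies, before applying $\phi^A_t$, in the fixed family $\mathcal K=\{\,\partial^\alpha_x B^{-1}\rho(x_0-\cdot):\ |\alpha|\le 3,\ x_0\in\IT^d\,\}$ of smooth functions on $\IT^d$: for the cut-off term this uses that the (sub)gradient of $|\cdot|_{C^3}$ is componentwise of the form $\pm\partial^\alpha_x B^{-1}\rho(x_0-\cdot)$, $x_0$ being the point where the $C^3$-sup is attained. Since $\rho\in C^\infty(\IT^d,\IR)$, its Fourier coefficients $\hat\rho(n)$ decay faster than any polynomial, so $\sup_{g\in\mathcal K}|\hat g(n)|\le C_\delta\,|n|^{-\delta}$ for every $\delta$; and since $\phi^A_t$ sends $z_n\mapsto\exp(it\sqrt{n^2+1})\,z_n$, it preserves the modulus of each Fourier coefficient. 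Multiplying by the bounded prefactors and summing the finitely many terms yields $|\widehat{\nabla^{\IH}\bar G_{\inter,t}(u)}(n)|\le C_\delta\,|n|^{-\delta}$ uniformly in $t\in\IR$ and $u\in T^*Q\times\IH$, which is the second assertion; the $u$-derivatives of $\nabla^{\IH}\bar G_{\inter,t}$ have the same structure and satisfy analogous bounds.

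For the first assertion I would use that $\pi_k$ is the orthogonal projection onto $T^*Q\times\IH^k$, so $\nabla\bar G^k_{\inter,t}(u)=\pi_k\,\nabla\bar G_{\inter,t}(\pi_k u)$, and split
\[
\nabla\bar G^k_{\inter,t}(u)-\nabla\bar G_{\inter,t}(u)=(\pi_k-\Id)\,\nabla\bar G_{\inter,t}(u)\ +\ \pi_k\bigl(\nabla\bar G_{\inter,t}(\pi_k u)-\nabla\bar G_{\inter,t}(u)\bigr).
\]
The first summand lives in the $\IH$-factor, commutes with differentiation in $u$, and kills all modes with $|n|\le k$; so by the second assertion (applied to $\nabla^{\IH}\bar G_{\inter,t}$ and its $u$-derivatives) its $C^2$-norm is $\le\sum_{|n|>k}C_\delta\,|n|^{-\delta}\to 0$ once $\delta>d$. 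For the second summand one uses that $\bar G_{\inter,t}$ depends on $u$ only through $(q,p)$ and the single smoothed quantity $u_{\IH}*\rho$, and that frequency truncation commutes with convolution and with $\phi^A_{-t}$; hence the difference $\nabla\bar G_{\inter,t}(\pi_k u)-\nabla\bar G_{\inter,t}(u)$ together with its $u$-derivatives up to order $2$ — by the Lipschitz (indeed smooth) dependence of all the ingredients above on $v_{\IH}*\rho$ in the $C^3$-topology and the crude bound of the $C^3$-norm by $\sum_n|n|^3$ times Fourier coefficients — is controlled by the $C^3$-tail $\sum_{|n|>k}|n|^3\,|\widehat{u_{\IH}*\rho}(n)|$, and the cut-off $\chi_{R_1}$ confines $u_{\IH}*\rho$ to a fixed ball in a $C^\kappa$-norm with $\kappa>d+3$ (taking the cut-off in such a norm), on which this tail $\to 0$ uniformly. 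Together with the analogous estimate for $\nabla^M\bar G^k_{\inter,t}$ this gives uniform $C^2$-convergence.

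The step I expect to be the main obstacle is this second summand: the frequency truncation $\pi_k$ does not converge to the identity uniformly on bounded subsets of the infinite-dimensional space $\IH$, so the finite-dimensional approximations cannot be controlled by a naive continuity estimate. The way out is precisely the structural feature exploited throughout the paper — that $\bar G_{\inter,t}$ only sees $u_{\IH}$ after the smoothing convolution with the smooth $\rho$ — which, together with the cut-off from \Cref{bounded}, reduces matters to uniform convergence of $\pi_k$ on a set bounded in a sufficiently high norm. Everything else is bookkeeping with the chain rule and the explicit eigenvalues $\sqrt{n^2+1}$ of $A$.
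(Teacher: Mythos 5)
Your proof is correct in outline but takes a genuinely different route from the paper's, and the route you chose introduces complications that the paper avoids.

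The paper's key observation, which your argument never uses, is the identity
\[
F_{\inter,t}\circ\pi_k \;=\; f_t\bigl((u_{\IH}*\rho^k)(q)\bigr), \qquad
\rho^k:=\sum_{|n|\le k}\hat\rho(n)\,z_n,
\]
i.e.\ truncating $u_{\IH}$ is the same as truncating $\rho$, because the frequency cut-off commutes with the convolution. This transfers the whole $k$-dependence onto the fixed smooth bump $\rho$: one gets $\nabla^{\IH}F^k_{\inter,t}(u)=(B^{-1}\rho^k)(q-\cdot)\cdot(\partial_1 f_t,\partial_2 f_t)$ and convergence follows because $\rho^k\to\rho$ with all derivatives (smoothness of $\rho$), independently of $u$; the Fourier decay claim follows even more directly from the exponential decay of $\hat\rho(n)$. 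Your decomposition
$\nabla\bar G^k_{\inter,t}-\nabla\bar G_{\inter,t}=(\pi_k-\Id)\nabla\bar G_{\inter,t}(u)+\pi_k\bigl(\nabla\bar G_{\inter,t}(\pi_k u)-\nabla\bar G_{\inter,t}(u)\bigr)$
is algebraically correct, and the first summand is handled cleanly by your own Fourier-decay estimate. But the second summand forces you to control $|\pi_k(u_{\IH}*\rho)-u_{\IH}*\rho|$ in a way that is uniform in $u$, and for that you need to escalate the cut-off to a $C^\kappa$-norm with $\kappa>d+3$. The paper never needs this; the $\rho^k$-identity makes the $u$-dependence disappear before any tail estimate is required, so $\kappa=3$ suffices. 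Also note that raising the order to $C^\kappa$ (a sup-norm) does not actually cure the non-differentiability issue you raise: to get a genuinely smooth cut-off one would have to pass to an $L^2$-type Sobolev norm or smooth the sup; the paper glosses over this as well, so I don't count it against you, but your suggested fix (``replace $|\cdot|_{C^3}$ by a higher-order norm of the same type'') does not achieve what you claim it does.

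There is also a subtlety that your treatment of the second summand does not address (and which, to be fair, the paper's write-up is also silent about): there may be $u$ for which $u$ lies outside the support of $\chi_{R_1}(|u_{\IH}*\rho|_{C^\kappa})$ while $\pi_k u$ lies inside the support of $\chi_{R_1}(|u_{\IH}*\rho^k|_{C^\kappa})$, because truncation can decrease a $C^\kappa$-sup-norm by an arbitrary amount. In that situation your ``controlled by the $C^\kappa$-tail of $u_{\IH}*\rho$'' argument gives nothing, since the cut-off provides no a priori bound on $|u_{\IH}*\rho|_{C^\kappa}$ at such a point. In the paper's applications the Floer curves are a priori bounded, so uniform convergence on bounded subsets (rather than on all of $T^*Q\times\IH$) suffices, and the argument goes through; but if you want to prove the lemma literally as stated, you would need to say something about this case. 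The paper's $\rho^k$-route sidesteps most of this, because both $(B^{-1}\rho^k)(q-\cdot)$ and $(B^{-1}\rho)(q-\cdot)$ are a priori uniformly bounded, so the only possible mismatch is in the scalar prefactors, which are bounded; the convergence claim is then at worst a claim about a bounded discrepancy, not an unbounded one.

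In short: your proof is a correct, more abstract argument (it would work for a general smoothing nonlinearity in the sense of Remark~2.2(2)), at the cost of a higher-order cut-off and a case analysis you did not carry out; the paper's proof is shorter and sharper because it exploits the specific structure $(\pi_k u_{\IH})*\rho=u_{\IH}*\rho^k$ to reduce everything to the smoothness of the fixed $\rho$.
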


\begin{proof}
We start by observing that the statement holds if and only if it holds for $\bar{F}_{\inter,t}$, since $\phi^A_t$ is a unitary map. Note that $F^k_{\inter,t}=F_{\inter,t}\circ\pi_k$ is given by $F^k_{\inter,t}=f_t((u_{\IH}*\rho^k)(q))$ with $\rho^k=\sum_{|n|\leq k}\hat{\rho}(n)\cdot z_n\in\IH^k$. It follows that the $\IH$-gradient of $F^k_{\inter,t}$, $$\nabla^{\IH}F^k_{\inter,t}(u)=(B^{-1}\rho^k)(q-\cdot)\kolomtwee{\partial_1f_t((u_{\IH}*\rho^k)(q))}{\partial_2f_t((u_{\IH}*\rho^k)(q))}$$ converges to the $\IH$-gradient of $F_{\inter,t}$, $$\nabla^{\IH}F_{\inter,t}(u)=(B^{-1}\rho)(q-\cdot)\kolomtwee{\partial_1f_t((u_{\IH}*\rho)(q))}{\partial_2f_t((u_{\IH}*\rho)(q))}$$ uniformly with respect to $u\in T^*Q\times\IH$, using (F3). Passing from $F_{\inter,t}$ to $\bar{F}_{\inter,t}$, this result does not only continue to hold true, but even holds for all derivatives up to order $3$, since we may assume that we have a uniform bound on the $C^3$-norm on $u_{\IH}*\rho$ and hence in particular a uniform bound on $(u_{\IH}*\rho)(q)$. For the second statement we observe that since $\rho\in C^{\infty}(\IT^d,\IR)$ we know that the Fourier coefficients $\hat{\rho}(n)$ and hence also  $\widehat{\rho(\cdot-q)}(n)$ converge to zero with exponential speed. It immediately follows that the Fourier coefficients $\widehat{\nabla^{\IH}F_{\inter,t}(u)}(n)$ converge to zero with exponential speed and this convergence is uniform with respect to $t\in\IR$ and $u\in T^*Q\times\IH$ using (F3). \end{proof}


After restricting to the finite-dimensional symplectic submanifold $T^*Q\times\IH^k$, note that, since $\bar{G}_{\inter,t}$ only has support in $\{u\in T^*Q\times\IH: |u_{\IH}*\rho|_{C^3}\leq R_1\}$, the finite-dimensional Hamiltonian $\bar{G}^k_{\inter,t}$ now has compact  support in $Q\times B_{R_k}(0)\subset Q\times\IH^k$, where $B_{R_k}(0)$ denotes a ball around $0$ in $\IH^k$. Note here we crucially need the assumption that all frequencies are present in $\rho$ in the sense that $\hat{\rho}(n)\neq 0$ for all $n\in\IZ^d$. \\

For fixed $a\in\IR$ let $\MM^{k,\leq a}_{\tau}$ denote the moduli space of Floer curves satisfying the $\tau$-dependent Floer equation with periodicity condition and with bounded symplectic action,
$$\MM^{k,\leq a}_{\tau}=\left\{\widetilde{u}=(\widetilde{u}_M,\widetilde{u}_{\IH}):\IR^2\to T^*Q\times\IH^k:\,\,(*1),(*2),(*3),(*4)\right\}$$ with 
\begin{eqnarray*}
&(*1):& \del_s\widetilde{u}+J(\widetilde{u})\del_t\widetilde{u}+\nabla F_{\prt,t}(\widetilde{u})+\sigma_\tau(s)\nabla \bar{G}^k_{\inter,t}(\widetilde{u})=0,\\
&(*2):& \widetilde{u}(s,t+T)=\phi_{-T}^A\widetilde{u}(s,t),\\
&(*3):& \pi_{\IH}\tilde{u}(s,\cdot)\to 0\,\,\textrm{as}\,\,s\to\pm\infty,\\
&(*4):& \AA^{F_{\prt}}_{\phi^A_T}(\tilde{u}(s,\cdot))\leq a\,\,\textrm{for}\,\,s\geq 2\tau+1,
\end{eqnarray*}
where $\sigma_{\tau}:\IR\to [0,1]$ is a smooth cut-off function with $\sigma_{\tau}(s)=0$ for $s\leq -1$, $s\geq 2\tau+1$ and $\sigma_{\tau}(s)=1$ for $0\leq s\leq 2\tau$. Note that we can define $N$ evaluation maps $\ev_{1,k,\tau},\ldots,\ev_{N,k,\tau}$ from $\MM^{k,\leq a}_{\tau}$ to the space $\Lambda^{\contr} Q=C^0_{\contr}(\IR/(T\IZ),Q)$ of contractible loops by 
$$\ev_{\alpha,k,\tau}:\MM^{k,\leq a}_{\tau}\to\Lambda^{\contr} Q,\,\,\widetilde{u}\mapsto \pi_Q\circ\widetilde{u}\left(2\tau\cdot\frac{\alpha}{N+1},\cdot\right),\,\,\alpha=1,\ldots,N,$$ where $\pi_Q$ denotes the projection from $T^*Q\times\IH^k$ onto the base manifold $Q$.\\

\begin{lemma}\label{bubbling-off} There is a uniform $C^1$-bound for the first derivative $T\widetilde{u}$ of  $\widetilde{u}\in\MM^{k,\leq a}_{\tau}$ which is independent of $k\in\IN$ and $\tau\geq 0$.\end{lemma}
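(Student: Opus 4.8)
The plan is to run the classical Gromov bubbling-off argument, checking throughout that every constant that appears is independent of the truncation level $k$ and of the homotopy parameter $\tau$. The proof splits into three parts: an a priori $C^{0}$-bound on the $T^{*}Q$-component of the curves, a uniform energy bound, and the bubbling analysis itself. The first two are the cotangent-bundle estimates of \cite[Section~5]{C}, the third is the infinite-dimensional argument of \cite{F}, \cite{FL}, and the uniformity in $k$ throughout will be supplied by \Cref{finite-dim}.

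First I would show that there is $C_{0}>0$, independent of $k$ and $\tau$, with $|\pi_{M}\widetilde u(s,t)|\leq C_{0}$ and $E(\widetilde u)\leq C_{0}$ for every $\widetilde u\in\MM^{k,\leq a}_{\tau}$. The $Q$-component lies in the closed manifold $Q$ automatically. For the $p$-component and the energy one runs the a priori argument of \cite[Section~5]{C}, whose periodic-orbit version was used already in the proof of \Cref{p-cutoff}: the action bound $(*4)$ together with (F1), (F2) bounds the $p$-component of the two asymptotic orbits of $\widetilde u$, hence --- using \Cref{bounded} to bound their $\IH$-component --- their symplectic action; the energy identity, with the homotopy term controlled by $\|\sigma_{\tau}'\|_{L^{1}}\leq 2$ and $\|\bar G_{\inter}\|_{C^{0}}<\infty$, then bounds $E(\widetilde u)$; and finally, as in \cite[Lemma~5.3]{C}, the action and energy bounds together with (F1), (F2) bound the $p$-component along the entire curve. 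Crucially $\|\bar G^{k}_{\inter}\|_{C^{0}}\leq\|\bar G_{\inter}\|_{C^{0}}$ and the constants $c_{0},c_{1},c_{2}$ do not depend on $k$ or $\tau$.

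Then I would argue by contradiction. If the claim fails there are sequences $k_{j},\tau_{j}$ and $\widetilde u_{j}\in\MM^{k_{j},\leq a}_{\tau_{j}}$ with $R_{j}:=\sup|T\widetilde u_{j}|\to\infty$; applying Hofer's lemma at a near-maximum of $|T\widetilde u_{j}|$ and rescaling by $v_{j}(z):=\widetilde u_{j}(z_{j}+z/R_{j})$ produces maps on discs of radii tending to $\infty$ with $|Tv_{j}(0)|=1$, $|Tv_{j}|\leq 2$, solving the rescaled equation $\del_{s}v_{j}+J(v_{j})\del_{t}v_{j}+\tfrac{1}{R_{j}}\bigl(\nabla F_{\prt}(v_{j})+\sigma_{\tau}\nabla\bar G^{k_{j}}_{\inter}(v_{j})\bigr)=0$, whose last term tends to $0$ in $C^{0}_{\loc}$ (here one uses $|p|\leq C_{0}$ to control $\nabla F_{\prt}$, and $\|\nabla\bar G^{k}_{\inter}\|_{C^{0}}\leq\|\nabla\bar G_{\inter}\|_{C^{0}}$ uniformly in $k$). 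Elliptic bootstrapping --- using the $C^{1}$-bound on $v_{j}$ and the uniform $C^{2}$-bound on $\nabla\bar G^{k}_{\inter}$ from \Cref{finite-dim} --- yields $C^{\infty}_{\loc}$-convergence of $Tv_{j}$ to $Tv$ for a finite-energy $J$-holomorphic plane $v=(v_{M},v_{\IH}):\IC\to T^{*}Q\times\IH$ with $|Tv(0)|=1$. Since $J_{\IH}$ is constant, $v_{\IH}$ is genuinely $J_{\IH}$-holomorphic, so $|Tv_{\IH}|^{2}$ is a non-negative subharmonic function on $\IC$ with finite total integral and therefore vanishes identically; hence $|Tv_{M}(0)|=1$, so $v_{M}$ is a non-constant finite-energy $J_{M}$-holomorphic plane in $T^{*}Q$ with image in the compact set $\{|p|\leq C_{0}\}$. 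By the removable-singularity theorem $v_{M}$ extends to a non-constant $J_{M}$-holomorphic sphere $S^{2}\to T^{*}Q$, which is impossible because $\omega_{M}=d\lambda_{M}$ is exact and Stokes' theorem then forces $0=\int_{S^{2}}v_{M}^{*}\omega_{M}$, contradicting positivity of its symplectic area. Thus $|T\widetilde u|$ is bounded uniformly in $k$ and $\tau$.

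The step I expect to be the main obstacle is making sure all of these constants are genuinely independent of $k$: this works because the cut-off interaction Hamiltonians $\bar G^{k}_{\inter}$ are bounded in $C^{2}$ uniformly in $k$ by \Cref{finite-dim} --- ultimately a consequence of the smoothness of $\rho$ and the resulting faster-than-polynomial decay of its Fourier coefficients --- and because the $\IH$-direction contributes no bubbles, by the Liouville-type vanishing above. Once this is in place the argument is exactly the cotangent-bundle bubbling analysis of \cite{C} merged with the infinite-dimensional compactness machinery of \cite{F}, \cite{FL}.
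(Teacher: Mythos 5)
Your overall plan --- a priori energy and $p$-bounds first, then a rescaling/bubbling contradiction --- matches the paper's, and the first part (energy bound via $\|\bar G^k_{\inter}\|_{C^0}\leq\|\bar G_{\inter}\|_{C^0}$ and \cite[Prop.~9.1.4]{MDSa}, together with the $p$-bounds from (F1)--(F2) as in \cite[Lemma~5.3]{C}) is essentially what the paper does.

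The genuine gap is in the bubbling step. After rescaling you obtain $v_j:\IC\to T^*Q\times\IH^{k_j}$ with $|Tv_j|\leq 2$ and you assert $C^\infty_{\loc}$-convergence to a limit $J$-holomorphic plane $v=(v_M,v_\IH):\IC\to T^*Q\times\IH$, which you then rule out via subharmonicity of $|Tv_\IH|^2$ and exactness of $\omega_M$. But extracting this limit is precisely what is not available for free in the present setting: the $\IH$-components of the $v_j$ live in subspaces $\IH^{k_j}$ of unboundedly growing dimension, bounded and $C^1$-equibounded sequences in a Hilbert space need not have convergent subsequences, and the only mechanism the paper has for controlling the high-frequency tail (\Cref{small-divisors}) is a Fourier argument that crucially uses the $\phi^A_T$-periodicity condition --- which the rescaled maps $v_j$ do \emph{not} satisfy. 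So the passage to a limit bubble is unjustified, and the subharmonicity observation, while correct, rests on it.

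The paper avoids this entirely by deriving the contradiction without ever taking a limit in the target: with $\widetilde v^k$ the rescaled curve on a ball of radius $\sqrt{C_k}$, a pigeonhole argument on the finite energy produces radii $r_k\in[\sqrt{C_k}/2,\sqrt{C_k}]$ for which the length of the boundary circle $\theta\mapsto\widetilde v^k(r_ke^{i\theta})$ tends to zero; exactness of the \emph{full} form $\omega$ on $T^*Q\times\IH$ (both $\omega_M=d\lambda_M$ and the constant form $\omega_{\IH}$ are exact) then forces the energy of $\widetilde v^k|_{D_{r_k}}$ to tend to zero; and the a priori mean-value estimate for almost-holomorphic maps gives $|\partial_s\widetilde v^k(0)|\to 0$, contradicting the normalization $|\partial_s\widetilde v^k(0)|=1$. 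Everything here is a statement about the finite-dimensional curves $\widetilde v^k$, with constants uniform in $k$, and no compactness in $\IH$ is needed. In particular your separate treatment of the $\IH$-bubble via subharmonicity is superfluous: exactness already kills bubbles in all directions simultaneously.

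A second, smaller point: your contradiction argument produces a uniform $C^0$-bound on $T\widetilde u$, whereas the lemma asserts a $C^1$-bound on $T\widetilde u$. The missing final step is elliptic bootstrapping --- using the uniform $C^2$-bound on $\nabla\bar G^k_{\inter}$ from \Cref{finite-dim} and the estimate from \cite[App.~B]{MDSa} quoted in the paper, $\|\nabla\bar G^k(\widetilde u^k)\|_{H^{2,p}}\leq c_1\|\nabla\bar G^k\|_{C^2}(1+\|\widetilde u^k\|_{L^\infty})(1+\|\widetilde u^k\|_{H^{2,p}})$ --- to promote the $C^0$-bound on $T\widetilde u$ to an $H^{3,p}$- and hence $C^1$-bound; you perform this bootstrap only on the rescaled curves, not on $\widetilde u$ itself.
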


\begin{proof} The first thing that needs to be observed is that the energy $E(\widetilde{u})$ of the Floer curves is uniformly bounded not just for all $\widetilde{u}\in\MM^{k,\leq a}_{\tau}$ and all $\tau\geq 0$, but also for all $k\in\IN$: Since $\|\bar{G}^k_{\inter,t}\|_{C^0}\leq \|\bar{G}_{\inter,t}\|_{C^0}$ for all $k\in\IN$, we get from \cite[Prop. 9.1.4]{MDSa} that $$E(\widetilde{u})\leq (a-b)+4T\|\bar{G}_{\inter,t}\|_{C^0},$$ where we again stress that $\bar{G}_{\inter,t}$ has finite $C^3$-norm by \Cref{G1G2}. Now let $\widetilde{u}^k$ be an arbitrary sequence of Floer curves in $\bigcup_k\bigcup_{\tau}\MM^{k,\leq a}_{\tau}$, where we want to assume without loss of generality that $\widetilde{u}^k\in\MM^{k,\leq a}_{\tau}$. As shown in \cite[Prop. 6.3]{FL}, see also \cite[Prop. 6.1]{F}, it follows precisely along the same lines as for sequences of Floer curves in fixed finite-dimensional exact symplectic manifolds using bubbling-off analysis and elliptic regularity that the $C^1$-norm of the first derivatives of the Floer curves $\widetilde{u}^k$ is uniformly bounded, $$\sup_k \|T\widetilde{u}^k\|_{C^1}<\infty.$$ 
We start by observing that the corresponding statement for the $C^0$-norm of $T\widetilde{u}$ in \cite[Lemma 6.2]{FL} is established using bubbling-off analysis as in \cite[Lemma 6.2]{F}: Indeed, assume without loss of generality that the first derivative is unbounded in the sense that
$$ C_k:=\max_{z=(s,t)\in\IR^2}\left\{|\partial_s \widetilde{u}^k(z)|\right\}=:\left|\partial_s\widetilde{u}^k(z_k)\right|\to\infty\,\,\textrm{as}\,\,k\to\infty.$$
Now consider the reparametrized map
$$\widetilde{v}^k:B_{\sqrt{C_k}}(0)\to T^*Q\times\IH^k:z\mapsto \widetilde{u}^k\left(\frac{z}{C_k}+z_k\right)$$
so that $|\partial_s\widetilde{v}^k(0)|=1$ and $|\partial_s\widetilde{v}^k(z)|\leq1$ for $|z|\leq\sqrt{C_k}$. By finiteness of area, it follows that for all $k$ there exists $\frac{\sqrt{C_k}}{2}\leq r_k\leq\sqrt{C_k}$ such that the length of the circle $\theta\mapsto\widetilde{v}^k(r_k e^{i\theta})$ goes to zero. By the exactness of $\omega$ the area of $\widetilde{v}^k_{r_k}$, which is the restriction of $\widetilde{v}^k$ to the disk of radius $r_k$, goes to zero. Finally one can employ an a priori bound to deduce that $\partial_s\widetilde{v}^k(0)\to 0$ as $k\to\infty$, and the proof of $\sup_k \|T\widetilde{u}^k\|_{C^0}<\infty$ follows by contradiction. For elliptic bootstrapping we can use \cite[App. B]{MDSa} to deduce that $$\|\nabla G^k(\tilde{u}^k)\|_{H^{2,p}}\leq c_1\cdot \|\nabla G^k\|_{C^2}\cdot(1+\|\tilde{u}^k\|_{L^\infty})(1+\|\tilde{u}^k\|_{H^{2,p}})$$ and hence the $H^{3,p}$-norm of $\widetilde{u}$ is bounded, see also the proof of \cite[Prop. 6.1]{F}. As mentioned already after the proof of \Cref{bounded}, replacing the $C^3$-norm in the definition of $\bar{G}_{\inter,t}$ by the $C^{\kappa}$-norm, we can obtain $H^{\kappa,p}$-bounds for any $\kappa$. \end{proof}

From now on fix $\theta_1,\ldots,\theta_N\in H^{*\neq 0}(\Lambda^{\contr} Q,\IZ_2)$ with $\theta_1\cup\ldots\cup\theta_N\neq 0$.

\begin{proposition}\label{cuplength}
There exists $a\in\IR$ such that for every $k\in\IN$ and $\tau\geq 0$ we have $$\ev_{1,k,\tau}^*\theta_1\cup\ldots\cup\ev_{N,k,\tau}^*\theta_N\neq 0\in H^*(\MM^{k,\leq a}_{\tau},\IZ_2).$$
\end{proposition}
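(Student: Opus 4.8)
The plan is to reduce the statement to the finite-dimensional cuplength estimate for Floer curves in cotangent bundles proven in \cite{C} (specifically the argument underlying \cite[Main Theorem 1.1]{C}), adapted to the $\tau$-dependent Floer equation with the interaction perturbation switched on only on the compact $s$-interval $[0,2\tau]$. The starting point is that by \Cref{finite-dim} the truncated interaction Hamiltonian $\bar{G}^k_{\inter,t}=\bar{G}_{\inter,t}\circ\pi_k$ is a genuine compactly supported smooth Hamiltonian perturbation on the finite-dimensional exact symplectic manifold $T^*Q\times\IH^k$, with uniform $C^0$- and $C^2$-bounds (in $k$) by \Cref{G1G2} and \Cref{finite-dim}; moreover its $C^0$-norm is bounded by that of $\bar{G}_{\inter,t}$. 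Combined with the uniform energy bound $E(\widetilde{u})\leq (a-b)+4T\|\bar{G}_{\inter,t}\|_{C^0}$ and the uniform first-derivative bound of \Cref{bubbling-off}, the moduli space $\MM^{k,\leq a}_{\tau}$ behaves exactly like the corresponding moduli space in \cite{C}. The key point is to choose the action bound $a$ once and for all, independently of $k$ and $\tau$: I would fix $a$ large enough that it exceeds the maximal action of the critical points of $\AA^{F_{\prt}}_{\phi^A_T}$ that are relevant for detecting the classes $\theta_1,\dots,\theta_N$ (such an $a$ exists because the relevant part of the Conley–Zehnder/Morse picture on $\Lambda^{\contr}Q$ only involves finitely much action, since $\theta_1\cup\dots\cup\theta_N$ is a fixed finite cup product), and because $\|\bar{G}_{\inter,t}\|_{C^0}$ is bounded uniformly in $k$ the action correction introduced by the interaction term is uniformly bounded, so enlarging $a$ absorbs it for all $k$ and $\tau$.

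The main steps, in order. First, I would recall from \cite{C} that for the unperturbed cotangent-bundle problem (interaction term zero, i.e. $\sigma_\tau\equiv 0$, so $\widetilde u$ is $\IH^k$-independent and $\widetilde u_{\IH}\equiv 0$ by the periodicity/decay conditions $(*3)$), the evaluation maps $\ev_{\alpha,k,\tau}$ land in $\Lambda^{\contr}Q$ and the corresponding cup product $\ev_1^*\theta_1\cup\dots\cup\ev_N^*\theta_N$ is nonzero on the relevant sublevel moduli space; this is precisely the mechanism by which \cite{C} obtains the cuplength bound, using that the moduli space of half-infinite Floer cylinders with one free Morse/Floer endpoint retracts onto $\Lambda^{\contr}Q$ (via the evaluation at the free end). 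Second, I would turn on the interaction perturbation: because the perturbation $\sigma_\tau(s)\nabla\bar G^k_{\inter,t}$ is compactly supported in $s$, small in $C^0$ uniformly in $k$, and (crucially) does not destroy the uniform energy bound or the a priori $C^1$-bound of \Cref{bubbling-off}, the moduli space $\MM^{k,\leq a}_{\tau}$ is a smooth compact (after Gromov–Floer compactification within the fixed finite-dimensional $T^*Q\times\IH^k$) cobordism-type deformation of the unperturbed one; in particular the evaluation maps remain defined and homotopic through the family, so the nonvanishing of the cup product is preserved. Third, I would note that the choice of $a$ made above guarantees that condition $(*4)$ does not cut away the part of the moduli space responsible for the cup product, uniformly in $k$ and $\tau$ — this is where the uniform $C^0$-bound on $\bar G_{\inter}$ enters decisively, since otherwise the action threshold would drift with $k$.

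The hard part will be verifying that the $\tau$-dependence and the compactly-supported cut-off $\sigma_\tau$ genuinely do not interfere with the cuplength argument: one must check that the evaluation maps $\ev_{\alpha,k,\tau}$ at the points $2\tau\cdot\frac{\alpha}{N+1}$ — which all lie inside the region $[0,2\tau]$ where $\sigma_\tau\equiv 1$ — still detect the classes $\theta_\alpha$, i.e. that the relevant relative cohomology/Morse-theoretic pairing in \cite{C} is insensitive to replacing the genuine gradient flow by the $s$-chopped one on a long but finite interval. I expect this to follow by the standard continuation/homotopy argument of Floer theory together with the observation that, as $\tau\to\infty$, the $\tau$-dependent equation converges (on compact subsets) to the honest Floer equation for $\bar G^k_t$, so for each fixed $k$ one reduces to the case treated in \cite{C}; the uniformity in $k$ is then supplied by \Cref{finite-dim} and the energy and derivative bounds already in hand. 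A secondary technical point is transversality for the evaluation maps (so that the pulled-back cup product is well-defined in $H^*(\MM^{k,\leq a}_{\tau},\IZ_2)$), which I would handle exactly as in \cite{C} by a generic perturbation of $F_{\prt,t}$ supported away from the region controlled by the a priori bounds, noting that such a perturbation does not affect any of the uniform estimates above.
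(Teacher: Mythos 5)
Your overall strategy matches the paper's: you reduce the $\tau=0$ case to Cieliebak's cuplength theorem (the decoupling of the Floer equation together with $(*3)$ forces $\widetilde{u}_{\IH}\equiv 0$, so $\MM^{k,\leq a}_0$ is identified with the cotangent-bundle moduli space of \cite[Theorem 7.6]{C}), and you then deduce the general-$\tau$ case by a homotopy/cobordism argument relying on uniform energy and derivative bounds. The paper proceeds in exactly this order. However, there is a genuine gap in your compactness step. You assert that $\MM^{k,\leq a}_{\tau}$ is ``a smooth compact (after Gromov--Floer compactification within the fixed finite-dimensional $T^*Q\times\IH^k$) cobordism-type deformation'' of the unperturbed moduli space, but $T^*Q\times\IH^k$ is noncompact in \emph{both} factors, and your proposal only addresses the $T^*Q$-direction (via Cieliebak's $C^0$-bounds). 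Nothing in your argument controls the $\IH^k$-component of the Floer curves, and since $\IH^k$ is a vector space this control is not automatic even though the dimension is finite. The paper closes this gap explicitly: because $\bar{G}^k_{\inter,t}$ has bounded support in $Q\times B_{R_k}(0)\subset Q\times\IH^k$ (this is precisely where the standing assumption $\hat\rho(n)\neq 0$ for all $n$ is used) and $F_{\prt,t}$ is independent of the $\IH^k$-coordinates, the $\IH^k$-component of the Floer curve is unperturbed holomorphic outside a bounded set, so the maximum principle forces it to stay inside that set. Without some version of this argument, the claimed Gromov--Floer compactness of $\MM^{k,\leq a}_{\tau}$ — and hence the homotopy invariance of the pulled-back cup product that your second step hinges on — is unjustified.

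A secondary, more cosmetic issue: you describe the perturbation $\sigma_\tau(s)\nabla\bar{G}^k_{\inter,t}$ as ``small in $C^0$ uniformly in $k$.'' It is not small; it is merely uniformly bounded (by $\|\bar{G}_{\inter,t}\|_{C^0}$, which is finite by \Cref{G1G2}). Fortunately, the homotopy argument needs boundedness — to control the energy drift $4T\|\bar{G}_{\inter,t}\|_{C^0}$ and thereby allow a $\tau$- and $k$-independent choice of the action threshold $a$ — and not smallness, so this does not break your argument; but the phrasing suggests a deformation-by-small-perturbation mechanism that is not what is actually happening.
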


\begin{proof}
The key ingredient in the proof is \cite[Theorem 7.6]{C}. Note that for $\tau=0$ the corresponding moduli spaces $\MM^{k,\leq a}_0$ consist of Floer curves $\widetilde{u}:\IR^2\to T^*Q\times\IH^k$ satisfying the Floer equation $\del_s\widetilde{u}+J(\widetilde{u})\del_t\widetilde{u}+\nabla F_{\prt,t}(\widetilde{u})=0$. Writing $\widetilde{u}$ as a pair $\widetilde{u}=(\widetilde{u}_M,\widetilde{u}_{\IH})$ with $\widetilde{u}_M:\IR^2\to T^*Q$ and $\widetilde{u}_{\IH}:\IR^2\to\IH^k$, we find that the Floer equation (as well as the periodicity condition) decouples,
\begin{eqnarray*}
\overline{\partial}_{J_M}\widetilde{u}_M+\nabla F_{\prt,t}(\widetilde{u}_M)=0,&&\widetilde{u}_M(s,t+T)=\widetilde{u}_M(s,t),\\
\CR\widetilde{u}_{\IH}=0,&&\widetilde{u}_{\IH}(s,t+T)=\phi^A_{-T}\widetilde{u}_{\IH}(s,t).
\end{eqnarray*}
Since we assume that $\widetilde{u}_{\IH}(s,\cdot)\to 0$ as $s\to\pm\infty$, it follows that $\widetilde{u}_{\IH}\equiv 0$. But this implies that $\MM^{k,\leq a}_0$ precisely agrees with the moduli space of Floer curves for which Cieliebak stated his Theorem 7.6 in \cite{C}. Indeed it follows from \cite[Theorem 7.6]{C} that there exists $a\in\IR$ such that  $$\ev_{1,k,0}^*\theta_1\cup\ldots\cup\ev_{N,k,0}^*\theta_N=\ev_{k,0}^*(\theta_1\cup\ldots\cup\theta_N)\neq 0\in H^*(\MM^{k,\leq a}_0,\IZ_2),$$ where $\ev_{1,k,0}=\ldots=\ev_{N,k,0}=:\ev_{k,0}$ for all $k\in\IN$ by definition. 

The corresponding statement for all $\tau\geq 0$ now follows from a standard finite-dimensional homotopy argument, where the Gromov-Floer compactness theorem plays the central role. We start with establishing the required $C^0$-bounds for Floer curves in $T^*Q\times\IH^k$. First, since $\bar{G}_{\inter,t}(q,p,u_{\IH})=\chi_{R_2}(\ln|p|)\cdot\chi_{R_1}(|u_{\IH}*\rho|_{C^3})\cdot G_{\inter,t}(q,u_{\IH})=0$ and hence $\bar{G}_t(q,p,u_{\IH})=F_{\prt,t}(q,p)$ for $|p|$ sufficiently large, it follows that the $C^0$-bound for Floer curves in $T^*Q$ established in \cite{C} immediately establishes a $C^0$-bound for our Floer curves in $T^*Q\times\IH^k$ which is even independent of $k\in\IN$. On the other hand, the compactness problems due to the unboundedness of $\IH^k$ is taken care of as in \cite[Prop. 7.2]{FL}: Since $\bar{G}^k_{\inter,t}$ has bounded support in $Q\times B_{R_k}(0)\subset Q\times\IH^k$ and $F_{\prt,t}$ by choice does not depend on the $\IH^k$-coordinates, it follows from the maximum principle for unperturbed holomorphic curves in $\IH^k$ that the Floer curve has to stay inside the same bounded subset. 

After establishing the necessary $C^0$-bounds, we observe that by \Cref{bubbling-off} we know that we even have uniform bounds for the $C^2$-norm of $\widetilde{u}\in\MM^{k,\leq a}$. By the classical elliptic bootstrapping arguments it follows that we hence have compactness with respect to the $C^1$-norm modulo breaking of Floer curves. \end{proof}

For every $k\in\IN$ let $\widetilde{u}^k\in\MM^{k,\leq a}_k$ be an arbitrary Floer curve in $T^*Q\times\IH^k$ for $\tau=k$. As in the proof of \cite[Theorem 10.4]{FL} the idea is to apply the infinite-dimensional generalization of the Gromov-Floer compactness result from \cite{F} and \cite{FL} to sequences of $N$ shifted Floer curves $\widetilde{u}^k_{\alpha}$, $\alpha=1,\ldots,N$ in order to obtain Floer curves in the infinite-dimensional symplectic manifold  $\widetilde{M}=T^*Q\times\IH$. More precisely, combining the proof of \cite[Theorem 10.4]{FL}, which itself essentially is based on \cite[Lemma 8.1]{F}, \cite[Theorem 8.1]{FL}, with the $C^0$-bounds from \cite[Theorem 5.4]{C} leads to a proof of the following

\begin{proposition}\label{compactness}
For every $\alpha=1,\ldots,N$, a subsequence of the sequence of \emph{shifted} Floer curves $$\widetilde{u}^k_{\alpha}:\IR^2\to T^*Q\times\IH^k,\,\, \widetilde{u}^k_{\alpha}(s,t)= \widetilde{u}^k(s+2k\frac{\alpha}{N+1},t)$$ $C^1$-converges to a solution $\widetilde{u}=\widetilde{u}_{\alpha}:\IR^2\to T^*Q\times\IH$ of the Floer equation 
$$\del_s\widetilde{u}+J(\widetilde{u})\del_t\widetilde{u}+\nabla \bar{G}_t(\widetilde{u})=0,\,\,\widetilde{u}(s,t+T)=\phi_{-T}^A\widetilde{u}(s,t),$$ which satisfies the following asymptotic conditions: there exists sequences $s_{\alpha,n}^\pm\in\IR$ with $s_{\alpha,n}^\pm\to\pm\infty$ as $n\to\infty$ such that
\begin{align*}
\lim_{n\to\infty}\widetilde{u}_{\alpha}(s_{\alpha,n}^-,t)=\bar{u}^-_{\alpha}(t),\qquad\lim_{n\to\infty}\widetilde{u}(s_{\alpha,n}^+,t)=\bar{u}^+_{\alpha}(t)
\end{align*}
in the $C^1$-sense where $\bar{u}^-_{\alpha}$ and $\bar{u}^+_{\alpha}$ are two $\phi_T^A$-periodic orbits of $\bar{G}_t$.
\end{proposition}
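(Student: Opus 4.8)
The plan is to combine the infinite-dimensional Gromov-Floer compactness machinery of \cite{F}, \cite{FL} with the $C^0$-bounds for the base component coming from \cite[Theorem 5.4]{C}, applied to the $N$ shifted sequences $\widetilde u^k_\alpha$. First I would record the uniform a priori estimates that are already available: by \Cref{bubbling-off} the first derivatives $T\widetilde u^k$ are bounded in $C^1$ uniformly in $k$ and $\tau=k$, and the energy $E(\widetilde u^k)$ is uniformly bounded as in the proof of that lemma. Shifting in the $s$-variable does not affect these bounds, so the same holds for every $\widetilde u^k_\alpha$. Next I would establish the $C^0$-bound on the full target: the $T^*Q$-component $\pi_M\circ\widetilde u^k_\alpha$ is $C^0$-bounded uniformly in $k$ by \cite[Theorem 5.4]{C} (this is exactly where the $\ln|p|$-cutoff in $\bar G_t$, making $\bar G_t=F_{\prt,t}$ for large $|p|$, is used, just as in \Cref{cuplength}), while the $\IH^k$-component $\pi_{\IH}\circ\widetilde u^k_\alpha$ stays inside the bounded set $B_{R_k}(0)\subset\IH^k$ by the maximum principle for unperturbed holomorphic curves, since $\bar G^k_{\inter,t}$ has support in $Q\times B_{R_k}(0)$ and $F_{\prt,t}$ is independent of the $\IH^k$-coordinates. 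The point of \Cref{finite-dim}, however, is that these $\IH$-bounds are actually uniform in $k$ in the stronger sense that the Fourier tails of $\nabla^{\IH}\bar G^k_{\inter,t}(u)$ decay faster than any polynomial, uniformly in $k$, $t$, $u$.

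With these bounds in hand I would run the diagonal-extraction argument of \cite[Lemma 8.1]{F}, \cite[Theorem 8.1]{FL}: fix $\alpha$, and for each $k$ view $\widetilde u^k_\alpha$ as a map into $T^*Q\times\IH\supset T^*Q\times\IH^k$. The uniform $C^1$-bounds plus elliptic bootstrapping (using $\|\nabla\bar G^k\|_{C^2}$-bounds from \Cref{finite-dim} and the bounds from \cite[App.\ B]{MDSa}) give uniform $C^2_{\loc}$-bounds, hence — after passing to a subsequence and using Arzel\`a–Ascoli on an exhaustion of $\IR^2$ together with the compact inclusion $\IH_1\hookrightarrow\IH$ — $C^1_{\loc}$-convergence to a limit $\widetilde u_\alpha:\IR^2\to T^*Q\times\IH$. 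Here the uniform super-polynomial decay of the Fourier coefficients from \Cref{finite-dim} is what guarantees that no energy escapes to the high modes: it forces the limit to take values in $\IH_\infty$ and, crucially, it lets one pass to the limit in the nonlinear term $\nabla\bar G^k_{\inter,t}\to\nabla\bar G_{\inter,t}$, so that $\widetilde u_\alpha$ solves $\del_s\widetilde u_\alpha+J(\widetilde u_\alpha)\del_t\widetilde u_\alpha+\nabla\bar G_t(\widetilde u_\alpha)=0$ with the $\phi^A_{-T}$-periodicity condition. The energy bound $E(\widetilde u_\alpha)<\infty$ passes to the limit, so the standard asymptotic analysis for finite-energy Floer cylinders (no breaking needs to be excluded here — one only needs convergence along sequences $s^\pm_{\alpha,n}\to\pm\infty$) produces $\phi^A_T$-periodic orbits $\bar u^\pm_\alpha$ of $\bar G_t$ as the claimed limits.

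The step I expect to be the main obstacle is showing that the limit $\widetilde u_\alpha$ genuinely lands in $\IH$ (in fact $\IH_\infty$) and not merely in $\IH_{-\infty}$, and that the Cauchy-Riemann equation survives in the limit despite the target Hilbert spaces $\IH^k$ varying with $k$ and the Hamiltonian $\bar G^k_{\inter,t}$ being only an approximation of $\bar G_{\inter,t}$. This is precisely the small-divisor issue: the unitary flow $\phi^A_t$ mixes modes at frequencies $\sqrt{n^2+1}$ that are badly approximated by multiples of $2\pi/T$, so without the smoothness of $\rho$ one could not control the interplay between the periodicity twist $\phi^A_{-T}$ and the gradient term. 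The resolution is to quote \Cref{finite-dim} (the uniform rapid decay of $\widehat{\nabla^{\IH}\bar G_{\inter,t}(u)}(n)$), which is where smoothness of $\rho$ enters, together with the uniform $\IH_1$-bound from \Cref{bounded}; these feed into the compactness scheme of \cite[Lemma 8.1]{F} and \cite[Theorem 8.1]{FL} exactly as in the proof of \cite[Theorem 10.4]{FL}, and the only genuinely new input over that reference is the base-component $C^0$-bound imported from \cite[Theorem 5.4]{C}.
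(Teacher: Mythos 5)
Your proposal takes essentially the same route as the paper --- finite-dimensional approximation, uniform energy and derivative bounds via \Cref{bubbling-off}, $C^0$-bounds for the base component from \cite[Theorem 5.4]{C}, a maximum-principle argument for the $\IH^k$-component, and then the infinite-dimensional compactness scheme of \cite{F}, \cite{FL} --- and you correctly identify \Cref{finite-dim} and the Diophantine condition as the ingredients that make the infinite-dimensional step work. However, there is a conceptual gap in the step where you assert that ``Arzel\`a--Ascoli on an exhaustion of $\IR^2$ together with the compact inclusion $\IH_1\hookrightarrow\IH$'' yields $C^1_{\loc}$-convergence. To use that compact inclusion you would need a uniform-in-$k$ bound on $\|\pi_{\IH}\widetilde u^k_\alpha(s,t)\|_{\IH_1}$, and no such bound is at hand: \Cref{bounded} only bounds periodic orbits, not Floer cylinders, and the maximum-principle bound $R_k$ on $\IH^k$ depends on $k$. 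Moreover, even with such a bound, compactness of $\IH_1\hookrightarrow\IH$ alone would only give $C^0$-convergence in the $\IH$-topology, not the claimed $C^1$-convergence.

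The paper's actual mechanism is the low-/high-mode decomposition $\widetilde u^k=(\widetilde u^{k,\ell},\widetilde u^{k,\ell}_\perp)$ together with the tail estimate $\sup_{k\geq\ell}\|\widetilde u^{k,\ell}_\perp\|_{C^1}\to 0$ as $\ell\to\infty$ (\Cref{small-divisors}). The nontrivial point your proposal glosses over is exactly how the super-polynomial decay of the gradient Fourier coefficients from \Cref{finite-dim} transfers to decay of the Floer curve itself: one Fourier-transforms the Floer equation into the family of scalar ODEs $(w^k_{m,n})'(s)=\lambda_{m,n}w^k_{m,n}(s)+g^k_{m,n}(s)$, uses the Diophantine lower bound $|\lambda_{m,n}|\geq c\,|n|^{-r}$, and combines it with the bound $|g^k_{m,n}(s)|\leq C_\delta|m|^{-2}|n|^{-\delta}$ via the elementary estimate $\|w^k_{m,n}\|_{C^0}\leq\|g^k_{m,n}\|_{C^0}/|\lambda_{m,n}|$. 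The super-polynomial decay in $n$ must dominate the polynomial loss $|n|^{r}$ coming from the small divisors --- this is precisely the small-divisor issue you correctly name but do not resolve. Citing the ``compactness scheme of \cite[Lemma 8.1]{F}'' at the end is acceptable shorthand, but as written your proposal attributes the compactness to the wrong mechanism; you should make the low-/high-mode tail estimate, not the compact Sobolev embedding, the explicit centerpiece.
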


\begin{proof} Note that we have $\sigma_k(s+2k\frac{\alpha}{N+1})\to 1$ for every $(s,t)\in\IR^2$ as $k\to\infty$. 
While \Cref{bubbling-off} is known to be the main ingredient for Gromov-Floer compactness in the case of closed finite-dimensional symplectic manifolds, here we need to deal with the noncompactness of the target manifold. While we have already discussed how $C^0$-bounds can be established, we now turn to the most striking problem, namely the problem that the target manifold $\widetilde{M}=T^*Q\times\IH$ is indeed infinite-dimensional. As in \cite{F} and \cite{FL} we start by observing that we can write the finite-dimensional Floer curve as a tuple
$$\widetilde{u}^k=(\widetilde{u}^{k,\ell},\widetilde{u}^{k,\ell}_\perp):\IR^2\to (T^*Q\times\IH^{\ell})\times(\IH^k/\IH^{\ell})=T^*Q\times\IH^k\subset T^*Q\times\IH,$$
where $\widetilde{u}^{k,\ell}_\perp$ denotes the normal component of $\widetilde{u}^k$. As in \cite[Prop. 7.1]{F}, \cite[Prop. 7.2]{FL} we prove in \Cref{small-divisors} below that we have $$\sup_{k\geq\ell}\left\|\widetilde{u}^{k,\ell}_\perp\right\|_{C^1}\to0\quad\mathrm{as}\quad \ell\to\infty.$$
On the other hand, as in the proof of \cite[Lemma 8.1]{F}, \cite[Theorem 8.1]{FL} it follows from \Cref{bubbling-off} using the standard elliptic bootstrapping argument together with a diagonal subsequence argument that for each $\alpha=1,\ldots,N$ there is a subsequence of $(\widetilde{u}^k_{\alpha})_k$ such that for all $\ell\in\IN$ the corresponding sequence  $(\widetilde{u}^{k,\ell}_{\alpha})_k$ $C^1$-converges to a map $\widetilde{u}^\ell_{\alpha}:\IR^2\to T^*Q\times \IH^{\ell}$ as $k\to\infty$. Together with \Cref{small-divisors} about the normal component this proves that the given subsequence of $(\widetilde{u}^k_{\alpha})_k$ converges in the $C^1$-sense to a map $\widetilde{u}_{\alpha}$ which solves the Floer equation for $\bar{G}_t=F_{\prt,t}+\bar{G}_{\inter,t}$ using again \Cref{finite-dim}. 

The asymptotic condition is proven as in the proof of \cite[Theorem 5.1]{F}, \cite[Theorem 8.2]{FL} and hence again crucially relies on \Cref{small-divisors}. For this choose sequences $s_{n,\alpha}^\pm\in\IR$ with $n\leq s_{n,\alpha}^+\leq 2n$ and $n\leq -s_{n,\alpha}^-\leq 2n$ such that 
$$
\int_0^T\left|\del_s\widetilde{u}_{\alpha}(s_{n,\alpha}^\pm,t)\right|^2dt\leq \frac{E(\widetilde{u}_{\alpha})}{n}\to 0\text{ as } n\to\infty,
$$ 
where we use that the energy of the limiting Floer curve $\widetilde{u}_{\alpha}$ is still bounded by $(a-b)+4T||\bar{G}^k_{\inter,t}||_{C^0}$. Now we write $\widetilde{u}_{\alpha}=(\widetilde{u}_{\alpha}^{\ell},\widetilde{u}_{\alpha,\perp}^{\ell}):\IR^2\to (T^*Q\times\IH^{\ell})\times \IH/\IH^{\ell}$ for $\ell\in\IN$. Using the $C^0$-bounds and after passing to a diagonal subsequence we can assume that $\widetilde{u}_{\alpha}^{\ell}(s_{n,\alpha}^\pm,\cdot)$ $C^1$-converges as $n\to\infty$ for all $\ell$ simultaneously. Using again \Cref{small-divisors} we can deduce that $\widetilde{u}_{\alpha}(s_{n,\alpha}^\pm,\cdot)$ $C^1$-converges as $n\to\infty$ to $\phi^A_T$-periodic orbits $\bar{u}_{\alpha}^\pm$ of $\bar{G}_t(p,q,u_{\IH})=F_{\prt,t}(q,p)+\chi_{R_2}(\ln|p|)\cdot\chi_{R_1}(|u_{\IH}*\rho|_{C^3})\cdot G_{\inter,t}(q,u_{\IH})$. \\

The following lemma is just a recast of \cite[Prop. 7.1]{F}, \cite[Prop. 7.2]{FL} and added for completeness. 

\begin{lemma} \label{small-divisors}
We have $$\sup_{k\geq\ell}\left\|\widetilde{u}^{k,\ell}_\perp\right\|_{C^1}\to0\quad\mathrm{as}\quad \ell\to\infty.$$
\end{lemma}

\begin{proof}
Since each $\tilde{u}^k:\IR^2\to T^*Q\times\IH^k$ satisfies the Floer equation $\del_s\widetilde{u}^k+J(\widetilde{u}^k)\del_t\widetilde{u}^k+\nabla \bar{G}^k_{s,t}(\widetilde{u})=0$, it follows that the $\IH$-component $\widetilde{u}^k_{\IH}:=\pi_{\IH}\widetilde{u}^k:\IR^2\to\IH^k$ satisfies the equation $\del_s\widetilde{u}^k_{\IH}+i\del_t\widetilde{u}^k_{\IH}+\nabla^{\IH} \bar{G}^k_{s,t}(\widetilde{u}^k)=0$. Note that here $\nabla^{\IH} \bar{G}^k_{s,t}$ denotes the (orthogonal) projection of the gradient $\nabla \bar{G}^k_{s,t}$ onto $\IH^k\subset\IH$ and we identify $\IH$ with the subspace of $\IH\otimes\IC$ on which $J_{\IH}=i$. In what follows we view each $\IH$-component $\widetilde{u}^k_{\IH}$ as a map from $\IR$ into $L^2_{\phi}(\IR,\IH)$, where the Hilbert space $L^2_{\phi}(\IR,\IH)$ consists of all maps $\bar{u}\in L^2(\IR,\IH)$ satisfying the periodicity condition $\bar{u}(t+T)=\phi^A_{-T}\bar{u}(t)$. Setting $\nabla^{\IH} \bar{G}^k(\widetilde{u}^k)(s)(t):=\nabla^{\IH} \bar{G}^k_{s,t}(\widetilde{u}^k(s,t))$, we can view $\nabla^{\IH} \bar{G}^k(\widetilde{u}^k)$ also as a map from $\IR$ into $L^2_{\phi}(\IR,\IH)$. The operator $-i\partial_t$ on $L^2_{\phi}(\IR,\IH)$ has a complete basis of eigenfunctions $u_{m,n}(t)=e^{\lambda_{m,n}it}\cdot z_n$ with corresponding eigenvalues $\lambda_{m,n}=m\frac{2\pi}{T}-\sqrt{n^2+1}$ for $m\in\IZ$ and $n\in\IZ^d$. Now we apply the corresponding Fourier transform to $\widetilde{u}^k_{\IH}:\IR\to L^2_{\phi}(\IR,\IH)$ and $\nabla^{\IH} \bar{G}^k(\widetilde{u}^k):\IR\to L^2_{\phi}(\IR,\IH)$ to obtain sequences of maps $w^k_{m,n}:=\widehat{\widetilde{u}^k_{\IH}}(m,n),g^k_{m,n}:=\widehat{\nabla^{\IH} \bar{G}^k(\widetilde{u}^k)}(m,n):\IR\to\IC$ satisfying $$(w^k_{m,n})'(s)=\lambda_{m,n}w^k_{m,n}(s)+g^k_{m,n}(s)\,\,\textrm{and}\,\,w^k_{m,p}(s)\to 0\,\,\textrm{as}\,\,s\to\pm\infty.$$ Since $\bar{G}_t$ is smooth with respect to $t$ and the $C^2$-norms of $\widetilde{u}^k$ are uniformly bounded in $k\in\IN$, it follows from \Cref{finite-dim} that for all $\delta\in\IN$ there exists $C_{\delta}>0$ such that $$|g_{m,n}^k(s)|\leq C_{\delta}\cdot |m|^{-2}\cdot |n|^{-\delta}$$ independent of $k\in\IN$, $s\in\IR$. On the other hand we know from the proof of \Cref{bounded} that there exist $c>0$ and $r>0$ such that $$|\lambda_{m,n}|=\frac{2\pi}{T}\cdot \sqrt{n^2+1}\cdot \Big|\frac{T}{2\pi}-\frac{m}{\sqrt{n^2+1}}\Big|\geq c\cdot |n|^{-r}.$$ Using the elementary estimate $\|w^k_{m,n}\|_{C^0}\leq \|g^k_{m,n}\|_{C^0}/|\lambda_{m,n}|$ from \cite[Lemma 7.2]{F}, \cite[Lemma 7.1]{FL} we get that  $$|\widehat{\widetilde{u}^k(s)}(m,n)|=|w_{m,n}^k(s)|\leq C_{\delta}/c\cdot |m|^{-2}\cdot |n|^{-\delta+r}$$ for all $\delta\in\IN$ independent of $k\in\IN$, $s\in\IR$. From $$|\partial_t^j\widetilde{u}^{k,\ell}_\perp(s,t)|^2\leq\sum_{|n|=\ell+1}^\infty \left(\sum_{m\in\IZ}|\widehat{\widetilde{u}^k(s)}(m,n)||m|^j\right)^2,$$ we can conclude that $$\sup_{k\geq\ell}\left\|\widetilde{u}^{k,\ell}_\perp\right\|_{C^0},\,\sup_{k\geq\ell}\left\|\del_t\widetilde{u}^{k,\ell}_\perp\right\|_{C^0}\to0\quad\mathrm{as}\quad \ell\to\infty.$$ On the other hand, using the Floer equation $\del_s\widetilde{u}^k_{\IH}+i\del_t\widetilde{u}^k_{\IH}+\nabla^{\IH} \bar{G}^k_{s,t}(\widetilde{u}^k)=0$ and \Cref{finite-dim}, we also get $\displaystyle\sup_{k\geq\ell}\left\|\del_s\widetilde{u}^{k,\ell}_\perp\right\|_{C^0}\to 0$ as $\ell\to\infty$, and the claim follows. \end{proof}

In order to see that we actually obtain $\phi^A_T$-periodic orbits of the original Hamiltonian $G_t(q,p,u_{\IH})=F_{\prt,t}(q,p)+G_{\inter,t}(q,u_{\IH})$ as long as $R_2>0$ is chosen sufficiently large, note that the $\phi^A_T$-periodic orbits $\bar{u}_{\alpha}^\pm$ of $\bar{G}_t$ that we have found have action less than or equal to $a+4T\|\bar{G}_{\inter,t}\|_{C^0}$, see \cite[Theorem 9.1.13]{MDSa} for a similar result. 
Now use \Cref{p-cutoff} to find $R_2>0$ such that the $\phi^A_T$-periodic orbits of $G_t=F_{\prt,t}+G_{\inter,t}$ agree with those of $\bar{G}_t=F_{\prt,t}+\bar{G}_{\inter,t}$ with $\bar{G}_{\inter,t}(q,p,u_{\IH})=\chi_{R_2}(\ln|p|)\cdot\chi_{R_1}(|u_{\IH}*\rho|_{C^3})\cdot G_{\inter,t}(q,u_{\IH})$ as long as the symplectic action is less than or equal to $a+4T\|\bar{G}_{\inter,t}\|_{C^0}$. On the other hand, working with the modified Hamiltonian $\bar{G}_t$ instead of $G_t$ allowed us to employ a maximum principle as well as the $C^0$-bounds for the $p$-component of Floer curves established in \cite{C}.\par

Next, in order to see that we obtain mutually different periodic solutions, we follow ideas from \cite{Sch}. Since $\ev_{1,k,\tau}^*\theta_1\cup\ldots\cup\ev_{N,k,\tau}^*\theta_N\neq 0\in H^*(\MM^{k,\leq a}_k,\IZ_2)$, we claim that, possibly after choosing a different sequence $\widetilde{u}^k\in\MM^{k,\leq a}_k$, we have that \begin{align*}
\mathcal{A}(\bar{u}^+_{\alpha})-\mathcal{A}(\bar{u}^-_{\alpha})=E(\widetilde{u}_{\alpha})=\int_{-\infty}^{+\infty}\int_0^T\left|\partial_s\widetilde{u}_{\alpha}\right|^2dt\;ds>0,
\end{align*} for all $\alpha=1,\ldots,N$ which in turn implies that $\bar{u}^-_{\alpha}$ and $\bar{u}^+_{\alpha}$ need to be pairwise different for all $\alpha=1,\ldots,N$. Note that, if this is not the case, then we find that in the limit $\tau\to\infty$ the image of the evaluation map in $(\Lambda^{\contr} Q)^N$ is degenerate in the sense that it factors through the map $\bigcup_{i=1}^N (\Lambda^{\contr}Q)^{i-1}\times\{\textrm{point}\}\times (\Lambda^{\contr}Q)^{N-i-1}\to (\Lambda^{\contr}Q)^N$. But this contradicts the fact that we have chosen $a\in\IR$ as in \Cref{cuplength}. Since furthermore $\AA(\bar{u}^{\pm}_{\alpha})\leq\AA(\bar{u}^{\pm}_{\beta})$ if $\alpha<\beta$ since the Floer equation is a gradient flow equation for the symplectic action functional, this proves the existence of $N+1$ mutually different solutions. \par

Further, as described in \cite[Section 8]{C} it follows from work of D. Sullivan that the finiteness of the fundamental group of $Q$ implies that the $\IZ_2$-cuplength of the space of contractible loops in $Q$ is infinite. \par 

And finally, analogous to the proof of \cite[Prop. 3.5]{F}, \cite[Theorem 8.3]{FL} we note that the limit Floer curve $\widetilde{u}$ has image in $T^*Q\times\IH_{\infty}$, since for the Fourier coefficients of the Floer curve $\widetilde{u}$ we know that for every $\delta\in\IN$ there exists $C_{\delta}>0$ such that $$|\widehat{\widetilde{u}(s)}(m,n)|\leq C_{\delta}/c \cdot |m|^{-2 +\frac{1}{2}}\cdot |n|^{-\delta+r}.$$ \end{proof}

\end{document}